\newcommand{\inv}{^{\raisebox{.2ex}{$\scriptscriptstyle-1$}}}
\definecolor{brightmaroon}{rgb}{0.76, 0.13, 0.28}
\newtheorem{thm}{Theorem}[section]
\newtheorem{exam}[thm]{Example}
\newtheorem{defn}[thm]{Definition}
\newtheorem{prop}[thm]{Proposition}
\newtheorem{lem}[thm]{Lemma}
\newtheorem{cor}[thm]{Corollary}
\newtheorem{rem}[thm]{Remark}
\numberwithin{equation}{section}
\author{A. Goswami}
\address{[1] Department of Mathematics and Applied Mathematics, University of Johannesburg, P.O. Box 524, Auckland Park 2006, South Africa. [2] National Institute for Theoretical and Computational Sciences (NITheCS), South Africa.} 
\email{agoswami@uj.ac.za}
\title{Normal structure spaces of groups}
\subjclass{20E28, 20E15, 20D25}
\keywords{prime subgroup, primitive subgroup, primary subgroup, strongly irreducible subgroup, coarse lower topology, structure space}
\begin{document}
  
\begin{abstract}
Can we do a topological study of various classes of normal subgroups endowed with a hull-kernel-type topology? In this paper, we have provided an answer to this question. We have introduced as well a new class of normal subgroups called primitive subgroups. Separation axioms, compactness, connectedness, and continuities of these spaces have been studied. We have concluded with the question of determining spectral spaces among them.
\end{abstract}   

\maketitle 

\section{Introduction} 
In \cite{S58}, the striking similarity has been pointed out between the properties of ideals of a commutative Noetherian ring and normal subgroups of a group with the maximal condition. This analogy has been achieved by the following correspondences: 
sum of ideals corresponds to the product of normal subgroups;
product of ideals corresponds to the commutator of the normal subgroups; and the residual quotient of ideals has an analogue introduced in \cite{S58}. Following the lead of \cite{S58}, the author in \cite{K64} has asked what else can be said in the presence of a finiteness condition, and  has introduced notions of prime and primary ideals, the radical of an ideal, $m$-systems, isolated components and residual quotients. The main result in \cite{K64} is to obtain existence and uniqueness theorems for the decomposition of a given normal subgroup as an intersection of primary subgroups.  Further studies have been done in \cite{ZY00, R04, ZL11, LZ15} by considering radical properties as a tool in investigating the structure of a group. In \cite{S60, S68}, prime subgroups are investigated in the
context of groups satisfying the maximal condition on normal subgroups. A study
of prime subgroups of groups have also recently been done in \cite{FGT22}.

Since the fundamental paper \cite{S37} on topologized (known as Stone topology) maximal ideals of Boolean algebras, intensive study of hull-kernel topologies have been done on various classes of ideals. Primitive ideals (of a noncommutative ring) endowed with a Jacobson topology (known as structure space) has been considered in \cite{J45} (see also \cite{J56}). The same Jacobson topology on primitive ideals of an enveloping algebra plays a crucial role in representations of complex finite-dimensional Lie algebras (see \cite{D96, J83, J95}). 
For the construction of an affine scheme, in \cite{G60}, it (called the Zariski topology or the spectral topology) has been considered on prime ideals of a commutative ring. In \cite{HJ65} (see also \cite{H71}), the hull-kernel topologies have been endowed on minimal prime ideals of a commutative ring. In a more recent paper \cite{A08} strongly irreducible ideals of a commutative ring endowed with the hull-kernel topology have been studied. In \cite{DG22}, a study has been made on various classes of ideals of a (commutative) ring  endowed with coarse lower topologies. Since we can not impose hull-kernel topologies on an arbitrary class of ideals, we need to classify such classes, where coarse lower topologies also coincide with the hull-kernel ones, and all these have been obtained in \cite{DG22}. 

We shall here consider prime, minimal prime, maximal, primary, regular, principal, and finitely generated normal subgroups of a group. We shall introduce primitive normal subgroups (see Definition \ref{prns}) which best in author's knowledge has never been considered before. Following the strategies from \cite{DG22}, we shall endow coarse lower topologies on all these classes of normal subgroups and shall study various topological properties of these ``normal structure spaces'' (see Definition \ref{nss}).    

\section{Preliminaries} 
In this paper $G$ will denote a group and $e$ the identity element of $G$. We shall omit the binary operation $\cdot$ of $G$ and shall write $a\cdot b$ as $ab.$ The set of all normal subgroups of $G$ shall be denoted by $\mathrm{N}_G$ and the proper normal subgroups by $\mathrm{Prp}_G$. We shall denote the trivial (normal) subgroup of $G$ by $\mathfrak{e}$. If $\{\mathfrak{n}_{\lambda}\}_{\lambda \in \Omega}$ is a collection of normal subgroups of $G$, then we shall denote the join (\textit{i.e.}, the normal subgroup generated by $\bigcup_{\lambda \in \Omega}\mathfrak{n}_{\lambda}$) of them by   $\sum_{\lambda \in \Omega}\mathfrak{n}_{\lambda},$ which is also a normal subgroup of $G$. If $\mathfrak{n}, \mathfrak{n}'\in \mathrm{N}_G,$ then $[\mathfrak{n}, \mathfrak{n}']$ will denote the subgroup generated by the commutators
$$[xyx\inv y\inv\mid x\in \mathfrak{n}, y\in \mathfrak{n}'].$$
Taking product of two normal subgroups $\mathfrak{n}, \mathfrak{n}'$ of $G$ as $[\mathfrak{n}, \mathfrak{n}']$, it is easy to verify that $(\mathrm{N}_G,[\,,],\cap)$ is a multiplicative lattice (see \cite{FFJ22}), from which we have the following important inclusion relation:
\begin{equation}\label{psi}
[\mathfrak{n}, \mathfrak{n}']\subseteq \mathfrak{n}\cap \mathfrak{n}'.
\end{equation}

Here we briefly talk about various classes of normal subgroups, on which later on we endow coarse lower topologies. 

\begin{defn}\cite[p.\,377]{S58}.
\emph{A proper normal subgroup $\mathfrak{p}$ of $G$ is called \emph{prime} whenever  $[\mathfrak{n}, \mathfrak{n}']\subseteq \mathfrak{p}$ implies either $\mathfrak{n}\subseteq \mathfrak{p}$ or $\mathfrak{n}'\subseteq \mathfrak{p}.$ We shall denote the set of all prime normal subgroups of $G$ by $\mathrm{Spec}_G$.  }
\end{defn} 

The following result characterizes groups in which trivial normal subgroups are prime.

\begin{prop}\cite[Lemma 2.1]{FGT22}.\label{afc}
The following are equivalent for a group $G$:
 
\begin{enumerate}
\item The trivial subgroup of $G$ is prime in $G.$
\item The lattice $\mathrm{N}_G$ is uniform (that is, the intersection of any two nontrival normal subgroups of G is nontrivial) and the only normal abelian subgroup of G is the trivial subgroup.
\end{enumerate}
\end{prop}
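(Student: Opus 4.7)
The plan is to prove the two implications separately, both by direct arguments using the characterization that $\mathfrak{e}$ is prime iff the condition $[\mathfrak{n},\mathfrak{n}']=\mathfrak{e}$ forces $\mathfrak{n}=\mathfrak{e}$ or $\mathfrak{n}'=\mathfrak{e}$. The key tool throughout will be the inclusion $[\mathfrak{n},\mathfrak{n}']\subseteq \mathfrak{n}\cap\mathfrak{n}'$ from \eqref{psi}, which allows us to move freely between the two products of the multiplicative lattice $\mathrm{N}_G$.

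For the direction (1)$\Rightarrow$(2), I would establish uniformity first. Given any two nontrivial normal subgroups $\mathfrak{n},\mathfrak{n}'$ with $\mathfrak{n}\cap\mathfrak{n}'=\mathfrak{e}$, inclusion \eqref{psi} immediately forces $[\mathfrak{n},\mathfrak{n}']=\mathfrak{e}$, and the primeness of $\mathfrak{e}$ then makes one of them trivial, a contradiction. For the second half of (2), take any normal abelian subgroup $\mathfrak{n}$ of $G$; since $[\mathfrak{n},\mathfrak{n}]=\mathfrak{e}$, primeness applied with $\mathfrak{n}'=\mathfrak{n}$ yields $\mathfrak{n}=\mathfrak{e}$.

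For (2)$\Rightarrow$(1), I would assume $[\mathfrak{n},\mathfrak{n}']\subseteq\mathfrak{e}$ and argue by contradiction, supposing both $\mathfrak{n}$ and $\mathfrak{n}'$ are nontrivial. By uniformity, the normal subgroup $\mathfrak{m}:=\mathfrak{n}\cap\mathfrak{n}'$ is then nontrivial. Since $\mathfrak{m}$ is contained in both factors, $[\mathfrak{m},\mathfrak{m}]\subseteq [\mathfrak{n},\mathfrak{n}']=\mathfrak{e}$, showing $\mathfrak{m}$ is abelian. This contradicts the hypothesis that the only normal abelian subgroup of $G$ is the trivial one.

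There is no real obstacle: the entire argument rests on applying \eqref{psi} cleanly in both directions and, in the reverse implication, on the observation that $\mathfrak{n}\cap\mathfrak{n}'$ itself supplies the offending nontrivial normal abelian subgroup. The only subtle point is recognising that both halves of condition (2) are genuinely needed in the reverse direction — uniformity supplies nontriviality of $\mathfrak{m}$, while the absence of normal abelian subgroups finishes the contradiction — so the hypothesis is used economically and no finiteness or chain condition needs to be invoked.
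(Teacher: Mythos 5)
Your argument is correct: both implications follow cleanly from the inclusion $[\mathfrak{n},\mathfrak{n}']\subseteq\mathfrak{n}\cap\mathfrak{n}'$ together with the monotonicity of the commutator, and the use of $\mathfrak{n}\cap\mathfrak{n}'$ as the offending normal abelian subgroup in the converse is exactly the right move; note that the paper itself does not prove this proposition but only cites \cite[Lemma 2.1]{FGT22}, whose proof is essentially the same argument. The only (harmless) caveat is the degenerate case of the trivial group, where (2) holds vacuously but $\mathfrak{e}=G$ fails to be a \emph{proper} normal subgroup, so one implicitly assumes $G$ nontrivial.
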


An equivalent definition of a prime normal subgroup follows from the following 
\begin{prop}\cite[Proposition 3.4]{K64}.
In a group $G$ the following are equivalent.
\begin{enumerate}
\item $\mathfrak{p}$ is a prime normal subgroup of $G$;
\item $[\langle a\rangle, \langle b \rangle]\subseteq \mathfrak{p}$ implies either $\langle a\rangle \subseteq \mathfrak{p}$ or $\langle b\rangle \subseteq \mathfrak{p}$ for all $a, b\in G.$
\end{enumerate}
\end{prop}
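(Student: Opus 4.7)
The plan is to first fix the convention that $\langle a\rangle$ denotes the normal closure of $a$ in $G$, so that $\langle a\rangle\in \mathrm{N}_G$ for every $a\in G$; with this reading, both implications reduce to definitional manipulations together with the observation that any normal subgroup containing an element automatically contains its normal closure.

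For (1)$\Rightarrow$(2) I would simply apply the defining property of primality to the normal subgroups $\mathfrak{n}=\langle a\rangle$ and $\mathfrak{n}'=\langle b\rangle$: the hypothesis $[\langle a\rangle,\langle b\rangle]\subseteq \mathfrak{p}$ is an instance of the defining implication, and yields $\langle a\rangle\subseteq \mathfrak{p}$ or $\langle b\rangle\subseteq \mathfrak{p}$ immediately.

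For (2)$\Rightarrow$(1) I would argue by contrapositive. Given $\mathfrak{n},\mathfrak{n}'\in \mathrm{N}_G$ with $[\mathfrak{n},\mathfrak{n}']\subseteq \mathfrak{p}$ but neither contained in $\mathfrak{p}$, pick witnesses $a\in \mathfrak{n}\setminus \mathfrak{p}$ and $b\in \mathfrak{n}'\setminus \mathfrak{p}$. Because $\mathfrak{n}$ is normal and contains $a$, it also contains the normal closure $\langle a\rangle$; similarly $\langle b\rangle\subseteq \mathfrak{n}'$. Monotonicity of the bracket then gives $[\langle a\rangle,\langle b\rangle]\subseteq [\mathfrak{n},\mathfrak{n}']\subseteq \mathfrak{p}$, while $a\in \langle a\rangle\setminus \mathfrak{p}$ and $b\in \langle b\rangle\setminus \mathfrak{p}$ preclude $\langle a\rangle\subseteq \mathfrak{p}$ and $\langle b\rangle\subseteq \mathfrak{p}$, contradicting (2).

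Both directions are essentially unwindings of the definition, so there is no real technical obstacle; the one substantive point is the convention that $\langle a\rangle$ denotes a normal subgroup. Reading it instead as the ordinary cyclic subgroup would break the equivalence: for instance, taking $a=b=(1\,2)(3\,4)$ in $A_5$ gives $[a,b]=e\in \mathfrak{e}$ with $\mathfrak{e}$ prime in $A_5$ (via Proposition \ref{afc}, since the only nontrivial normal subgroup of $A_5$ is the nonabelian group $A_5$ itself), yet neither cyclic $\langle a\rangle$ nor $\langle b\rangle$ is contained in $\mathfrak{e}$. Ruling out that reading is what fixes the correct interpretation and makes the equivalence hold.
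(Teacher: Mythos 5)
Your argument is correct: the forward direction is an instantiation of the definition, and the contrapositive argument for (2)$\Rightarrow$(1) --- choosing witnesses $a\in\mathfrak{n}\setminus\mathfrak{p}$, $b\in\mathfrak{n}'\setminus\mathfrak{p}$ and using monotonicity of the bracket together with $\langle a\rangle\subseteq\mathfrak{n}$, $\langle b\rangle\subseteq\mathfrak{n}'$ --- is the standard proof of Kurata's Proposition 3.4, which the paper merely cites without reproving. Your insistence that $\langle a\rangle$ must be read as the normal closure (with the $A_5$ example ruling out the cyclic reading) is the right reading and consistent with the paper's use of ``principal normal subgroup.''
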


A prime normal subgroup $\mathfrak{p}$ is called  \emph{minimal prime}  belonging to a normal subgroup $\mathfrak{n}$, if it contains $\mathfrak{n}$, and if there is no prime normal subgroup containing $\mathfrak{n}$ which is strictly contained in $\mathfrak{p}$. We shall denote the set of all minimal prime normal subgroups of $G$ by $\mathrm{MinSpec}.$ 
Following \cite[Proposition 1.13]{K64}, we say that the \emph{radical} $\sqrt{\mathfrak{n}}$ of a normal subgroup $\mathfrak{n}$ in $G$ is
the intersection of all prime normal subgroups containing $\mathfrak{n}$. Note that this definition of radical is somewhat different from given in \cite[p. 376]{S58}. If $\sqrt{\mathfrak{n}}=\mathfrak{n},$ then
$\mathfrak{n}$ is called a \emph{radical normal subgroup} (or simply \emph{radical}) and we denote set of all such normal subgroups of $G$ by $\mathrm{Rad}_G$. Like in rings, the radical of a normal subgroup also enjoy the following properties. 
 
\begin{prop}\cite[Proposition 1.14]{K64}.
Suppose that $\mathfrak{n}, \mathfrak{n}'\in \mathrm{N}_G.$ Then
\begin{enumerate}
\item $\sqrt{\mathfrak{n}}\supseteq \mathfrak{n},$
\item $\mathfrak{n}\supseteq \mathfrak{n}'$ implies $\sqrt{\mathfrak{n}}\supseteq \sqrt{\mathfrak{n}'},$
\item $\sqrt{\sqrt{\mathfrak{n}}}=\sqrt{\mathfrak{n}},$
\item $\sqrt{[\mathfrak{n}, \mathfrak{n}']}=\sqrt{\mathfrak{n}\cap \mathfrak{n}'}=\sqrt{\mathfrak{n}}\cap \sqrt{\mathfrak{n}'}.$
\end{enumerate}
\end{prop}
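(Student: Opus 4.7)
The plan is to prove the four properties essentially from the definition of the radical as an intersection of primes, relying on the inclusion (\ref{psi}) and the defining property of primeness only in the last item.

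First I would dispatch (1) and (2). For (1), since $\sqrt{\mathfrak{n}}$ is by definition the intersection of all primes containing $\mathfrak{n}$, every such prime contains $\mathfrak{n}$, so the intersection does too. For (2), if $\mathfrak{n}\supseteq \mathfrak{n}'$, then every prime containing $\mathfrak{n}$ also contains $\mathfrak{n}'$, so the family indexing $\sqrt{\mathfrak{n}}$ is a subfamily of the one indexing $\sqrt{\mathfrak{n}'}$, and a larger intersection is smaller, giving $\sqrt{\mathfrak{n}}\supseteq \sqrt{\mathfrak{n}'}$.

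Next I would handle (3). The inclusion $\sqrt{\sqrt{\mathfrak{n}}}\supseteq \sqrt{\mathfrak{n}}$ follows from (1). For the reverse, I would observe that the set of primes containing $\sqrt{\mathfrak{n}}$ equals the set of primes containing $\mathfrak{n}$: any prime $\mathfrak{p}\supseteq \sqrt{\mathfrak{n}}$ automatically contains $\mathfrak{n}$ by (1); conversely, any prime containing $\mathfrak{n}$ is one of the subgroups being intersected to form $\sqrt{\mathfrak{n}}$, so it contains $\sqrt{\mathfrak{n}}$. Hence the two intersections coincide.

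The main work lies in (4). The easy direction uses (\ref{psi}): from $[\mathfrak{n},\mathfrak{n}']\subseteq \mathfrak{n}\cap\mathfrak{n}'\subseteq \mathfrak{n}$ (and symmetrically $\subseteq\mathfrak{n}'$), applying (2) gives $\sqrt{[\mathfrak{n},\mathfrak{n}']}\subseteq \sqrt{\mathfrak{n}\cap\mathfrak{n}'}\subseteq \sqrt{\mathfrak{n}}\cap \sqrt{\mathfrak{n}'}$. The harder direction is $\sqrt{\mathfrak{n}}\cap\sqrt{\mathfrak{n}'}\subseteq \sqrt{[\mathfrak{n},\mathfrak{n}']}$, and this is the only place where primeness is used in a nontrivial way. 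I would let $\mathfrak{p}$ be an arbitrary prime with $\mathfrak{p}\supseteq [\mathfrak{n},\mathfrak{n}']$; the definition of prime forces $\mathfrak{n}\subseteq \mathfrak{p}$ or $\mathfrak{n}'\subseteq \mathfrak{p}$. In either case $\sqrt{\mathfrak{n}}\cap \sqrt{\mathfrak{n}'}\subseteq \mathfrak{p}$, because whichever of $\mathfrak{n},\mathfrak{n}'$ is contained in $\mathfrak{p}$ contributes its radical to $\mathfrak{p}$ (since $\mathfrak{p}$ itself sits in the intersection defining that radical). Intersecting over all such $\mathfrak{p}$ yields $\sqrt{\mathfrak{n}}\cap\sqrt{\mathfrak{n}'}\subseteq \sqrt{[\mathfrak{n},\mathfrak{n}']}$, closing the chain to equalities.

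The only real obstacle is the last inclusion in (4), and even that is an immediate application of the prime property once one picks a prime containing $[\mathfrak{n},\mathfrak{n}']$; the rest is formal manipulation of intersections. Implicit throughout is the existence of enough prime normal subgroups containing a given $\mathfrak{n}$, but since we are working with proper normal subgroups and the statement is vacuously stable when no such primes exist (then $\sqrt{\mathfrak{n}}=G$, and the identities still hold), this does not obstruct the argument.
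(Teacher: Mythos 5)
Your argument is correct and is the standard one: items (1)--(3) are formal consequences of defining $\sqrt{\mathfrak{n}}$ as the intersection of the primes containing $\mathfrak{n}$, and the only substantive step, $\sqrt{\mathfrak{n}}\cap\sqrt{\mathfrak{n}'}\subseteq\sqrt{[\mathfrak{n},\mathfrak{n}']}$, correctly invokes primeness together with the inclusion (\ref{psi}) and handles the empty-intersection convention. The paper itself gives no proof (it cites the result from Kurata), but your route is exactly the expected one.
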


The following result characterizes prime normal subgroups in terms of radical and irreducible normal subgroups. 

\begin{prop}\cite[p.\,208]{K64}. 
A normal subgroup $\mathfrak{p}$ of a group $G$ is prime if and
only if it is radical and  irreducible.
\end{prop}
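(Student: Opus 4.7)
The plan is to prove both implications separately; both are short once one uses the radical properties from the preceding proposition and the identity \eqref{psi}.

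For the direction \emph{prime} $\Longrightarrow$ \emph{radical and irreducible}: radicality is immediate because $\sqrt{\mathfrak{p}}$ is an intersection of primes containing $\mathfrak{p}$, one member of that intersection being $\mathfrak{p}$ itself, so $\sqrt{\mathfrak{p}}\subseteq \mathfrak{p}$, while the reverse inclusion is part of the preceding proposition. For irreducibility, if $\mathfrak{p}=\mathfrak{n}\cap \mathfrak{n}'$ then \eqref{psi} gives $[\mathfrak{n},\mathfrak{n}']\subseteq \mathfrak{p}$, and primality forces $\mathfrak{n}\subseteq \mathfrak{p}$ or $\mathfrak{n}'\subseteq \mathfrak{p}$; the reverse inclusions are automatic.

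For the converse direction, suppose $\mathfrak{p}$ is radical and irreducible and that $[\mathfrak{n},\mathfrak{n}']\subseteq \mathfrak{p}$. Put
\[
\mathfrak{a}=\mathfrak{p}+\mathfrak{n},\qquad \mathfrak{b}=\mathfrak{p}+\mathfrak{n}',
\]
where $+$ denotes the join in $\mathrm{N}_G$. The key step is the estimate $[\mathfrak{a},\mathfrak{b}]\subseteq \mathfrak{p}$. Using distributivity of the commutator product over joins in the multiplicative lattice $(\mathrm{N}_G,[\,,\,],\cap)$,
\[
[\mathfrak{a},\mathfrak{b}]=[\mathfrak{p},\mathfrak{p}]+[\mathfrak{p},\mathfrak{n}']+[\mathfrak{n},\mathfrak{p}]+[\mathfrak{n},\mathfrak{n}'],
\]
and each of the four summands lies in $\mathfrak{p}$: the first three by \eqref{psi} applied to $\mathfrak{p}$ against $\mathfrak{p},\mathfrak{n}',\mathfrak{n}$ respectively, and the fourth by hypothesis. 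Then radicality together with property~(4) of the previous proposition yields
\[
\mathfrak{a}\cap \mathfrak{b}\subseteq \sqrt{\mathfrak{a}\cap \mathfrak{b}}=\sqrt{[\mathfrak{a},\mathfrak{b}]}\subseteq \sqrt{\mathfrak{p}}=\mathfrak{p},
\]
and since $\mathfrak{p}\subseteq \mathfrak{a}\cap \mathfrak{b}$ is obvious we obtain $\mathfrak{p}=\mathfrak{a}\cap \mathfrak{b}$. Irreducibility now forces $\mathfrak{a}=\mathfrak{p}$ or $\mathfrak{b}=\mathfrak{p}$, i.e.\ $\mathfrak{n}\subseteq \mathfrak{p}$ or $\mathfrak{n}'\subseteq \mathfrak{p}$, which is primality.

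The only nontrivial step is the distributivity $[\mathfrak{a}+\mathfrak{b},\mathfrak{c}]=[\mathfrak{a},\mathfrak{c}]+[\mathfrak{b},\mathfrak{c}]$, which is folded into the multiplicative lattice statement cited before \eqref{psi}; if one prefers, it follows directly from the commutator identity $[xy,z]=[x,z]^y[y,z]$ together with the normality of the subgroups involved, which makes the conjugates land back in $[\mathfrak{a},\mathfrak{c}]\cdot[\mathfrak{b},\mathfrak{c}]$. Everything else is just assembling \eqref{psi}, the radical identities, and the definition of irreducibility.
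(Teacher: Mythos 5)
Your proof is correct. Note that the paper itself offers no proof of this proposition --- it is quoted from Kurata \cite{K64} --- so there is nothing to compare against line by line; your argument is the standard one and it goes through. The forward direction is exactly as you say (using that $\mathfrak{p}$ is one of the primes in the intersection defining $\sqrt{\mathfrak{p}}$, and the inclusion \eqref{psi} for irreducibility), and in the converse the key identity $[\mathfrak{a}+\mathfrak{b},\mathfrak{c}]=[\mathfrak{a},\mathfrak{c}]+[\mathfrak{b},\mathfrak{c}]$ is legitimate for normal subgroups (it is part of the multiplicative-lattice structure of $(\mathrm{N}_G,[\,,\,],\cap)$ asserted before \eqref{psi}, and follows from $[xy,z]$ being a conjugate of $[y,z]$ times $[x,z]$, with all the relevant commutator subgroups normal in $G$). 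The only cosmetic caveat is that ``irreducible'' is never formally defined in this paper; you are correctly using Kurata's meaning, namely meet-irreducibility ($\mathfrak{p}=\mathfrak{a}\cap\mathfrak{b}$ forces $\mathfrak{p}=\mathfrak{a}$ or $\mathfrak{p}=\mathfrak{b}$), which is weaker than the ``strongly irreducible'' notion the paper does define.
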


\begin{defn}\cite[Proposition 2.2]{K64}.
\emph{A normal subgroup $\mathfrak{q}$ of a group $G$ is called \emph{primary} if $[\mathfrak{n}, \mathfrak{n}']\subseteq \mathfrak{q}$ and $\mathfrak{n}\nsubseteq \mathfrak{q},$ then $\mathfrak{n}'\subseteq \sqrt{\mathfrak{q}}.$ We denote the set of all primary normal subgroups of $G$ by $\mathrm{Prim}_G$.}
\end{defn}

\begin{prop}\cite{K64}.
If $\mathfrak{q}$ is radical then the following conditions are equivalent:
\begin{enumerate}
\item $\mathfrak{q}$ is a prime subgroup of $G$,
\item $\mathfrak{q}$ is a primary subgroup of $G$,
\item $\mathfrak{q}$ is irreducible.
\end{enumerate}
\end{prop}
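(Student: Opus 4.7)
The plan is to establish the equivalences cyclically, leveraging the two prior propositions and the definition of primary. The radical hypothesis $\sqrt{\mathfrak{q}} = \mathfrak{q}$ is only needed in one direction; the other implications are essentially formal.

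First, I would dispatch (1) $\Leftrightarrow$ (3) immediately: the preceding proposition already asserts that $\mathfrak{p}$ is prime if and only if it is radical and irreducible, so under the standing hypothesis that $\mathfrak{q}$ is radical, this reduces precisely to the equivalence of being prime and being irreducible. No further work is required here.

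Next, I would handle (1) $\Rightarrow$ (2). Suppose $\mathfrak{q}$ is prime and $[\mathfrak{n}, \mathfrak{n}'] \subseteq \mathfrak{q}$ with $\mathfrak{n} \nsubseteq \mathfrak{q}$. Primeness forces $\mathfrak{n}' \subseteq \mathfrak{q}$, and since $\mathfrak{q} \subseteq \sqrt{\mathfrak{q}}$ by item~(1) of the radical proposition, we conclude $\mathfrak{n}' \subseteq \sqrt{\mathfrak{q}}$. This direction is purely formal and does not use the radical hypothesis.

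Finally, for (2) $\Rightarrow$ (1), this is where the radical assumption carries the argument. Assume $\mathfrak{q}$ is primary and radical, and suppose $[\mathfrak{n}, \mathfrak{n}'] \subseteq \mathfrak{q}$ with $\mathfrak{n} \nsubseteq \mathfrak{q}$. By the primary condition, $\mathfrak{n}' \subseteq \sqrt{\mathfrak{q}}$, and the radical hypothesis collapses this to $\mathfrak{n}' \subseteq \mathfrak{q}$, yielding primeness. There is no real obstacle here; the only subtlety is that without $\sqrt{\mathfrak{q}} = \mathfrak{q}$, one would remain stuck at $\mathfrak{n}' \subseteq \sqrt{\mathfrak{q}}$, which is why the hypothesis appears at exactly this step. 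The chain (1) $\Rightarrow$ (2) $\Rightarrow$ (1) together with the cited (1) $\Leftrightarrow$ (3) then completes the proof.
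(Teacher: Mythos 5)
The paper states this proposition as a citation to Kurata \cite{K64} and gives no proof of its own, so there is no in-paper argument to compare against; your reconstruction is the standard one and is correct. The reduction of (1)~$\Leftrightarrow$~(3) to the preceding proposition (``prime iff radical and irreducible'') under the standing radical hypothesis, the formal implication (1)~$\Rightarrow$~(2) via $\mathfrak{q}\subseteq\sqrt{\mathfrak{q}}$, and the use of $\sqrt{\mathfrak{q}}=\mathfrak{q}$ precisely in (2)~$\Rightarrow$~(1) are all exactly where they should be. The only point you leave unaddressed is a degenerate one: the paper's definition of \emph{prime} requires $\mathfrak{q}$ to be a \emph{proper} normal subgroup, whereas $\mathfrak{q}=G$ is vacuously primary and (with the empty-intersection convention) radical, so (2)~$\Rightarrow$~(1) implicitly assumes $\mathfrak{q}\neq G$; this is a convention issue rather than a flaw in the argument, but it deserves a clause.
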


\begin{defn}
\emph{A normal subgroup $\mathfrak{n}$ of $G$ is \emph{strongly irreducible}, if whenever $\mathfrak{n}\subseteq \mathfrak{a}\cap \mathfrak{b},$ where $\mathfrak{a}, \mathfrak{b}\in \mathrm{N}_G,$ then either $\mathfrak{n}\subseteq\mathfrak{a}$ or $\mathfrak{n}\subseteq\mathfrak{b}$. We shall denote the set of all strongly irreducible normal subgroups of $G$ by $\mathrm{Irr}^+_G.$ }
\end{defn}

\begin{rem}
\emph{Strongly irreducible ideals were introduced in \cite{F49} for commutative rings, under the name  \emph{primitive} ideals (not to be confused with Jacobson's primitive ideals). In \cite[p.\,301, Exercise 34]{B72}, the ideals of the same spectrum are called \emph{quasi-prime} ideals. The terminology ``strongly irreducible'' was first used for noncommutative rings in \cite{B53}.
These subgroups are going to play a crucial role in `determining' hull-kernel topologies as we will see in next section. }
\end{rem}

A normal subgroup of a group $G$ is termed a \emph{finitely generated} if it is finitely generated as a group, and we denote the set of such subgroups by $\mathrm{Fgen}_G$. The set of all principal normal subgroups (\textit{i.e.}, the subgroup generated by a single element of $G$) is denoted by $\mathrm{Prin}_G$. A non-trivial normal subgroup $\mathfrak{n}$ of a group $G$ is called \emph{minimal} such that between $\mathfrak{n}$ and the trivial subgroup $1$ there are no other normal subgroups of $G$ and we denote the set of all such normal subgroups by $\mathrm{Min}_G$. 

\begin{defn}
\emph{We say a proper normal subgroup $\mathfrak{m}$ of a group $G$ is \emph{maximal} if there is no proper normal subgroup of $G$ that properly contains $\mathfrak{m}$. We shall denote the set of all maximal normal subgroups of $G$ by $\mathrm{Max}_G$.} 
\end{defn}

\begin{rem}\label{nmns}
\emph{Note that any nontrivial finite group has maximal normal
subgroups. Moreover, $G/\mathfrak{m}$ is a simple group if and only if $\mathfrak{m}$ is a
maximal normal subgroup of $G$ (see \cite{AB95}).   
However, there are infinite groups with no maximal normal subgroups. Consider a Pr\"{u}fer group: $$\mathbb{Z}_{p^{\infty}}=\lim_{\longleftarrow}\mathbb{Z}_{p^{i}}\qquad (p\;\text{is a prime}),$$
which is an abelian group and hence all subgroups are normal, but none of them is maximal. This is in contrast with unitary rings (\textit{i.e.}, rings with identity), where assuming axiom of choice always guarantees existence of maximal ideals. }
\end{rem}

Following \cite{KS04}, next we define regular normal subgroups. The action of a group $G$ on a set $\Omega$ is \emph{regular} if, for every pair $(\lambda, \beta)\in \Omega\times \Omega,$ there exists exactly one $g\in G$ such that $\lambda^g=g\inv \lambda g=\beta.$

\begin{defn}
\emph{If $\mathfrak{n}$ is a normal subgroup of $G$
that acts regularly on $\Omega$, then $\mathfrak{n}$ is called a \emph{regular} normal subgroup of $G.$ We denote the set of all regular normal subgroups of $G$ by $\mathrm{Reg}_G$. } 
\end{defn}

Finally we introduce the notion of a primitive normal subgroup of a group. A (left) \emph{$G$-module} is an abelian group $(M,+,0)$ endowed with a map $G\times M\to M$ (defined by: $(g,m)\mapsto gm$) satisfying the following identities:
\begin{enumerate}
\itemsep -.1em
\item[(i)] $g(m+m')=gm+gm',$
\item[(ii)] $(gg')m=g(g'm),$
\item[(iii)] $em=m,$
\item[(iv)] $g0=0,$
\end{enumerate}
for all $g, g'\in G$ and for all $m,m'\in M.$ A $G$-module is said to be \emph{simple} if $M$ does not have any submodule except $0$ and itself. A (left) \emph{annihilator} of $G$ is defined by
$$\mathrm{Ann}_G(M)=\{g\in G\mid gm=0\;\text{for all}\; m\in M\},$$
where $M$ is a simple $G$-module. Note that an annihilator of $G$ can be defined for any $G$-module $M$. However, we have considered simple module $M$ because eventually we will use it to define a primitive normal subgroup of $G$. Clearly,

\begin{prop}
An annihilator $\mathrm{Ann}_G(M)$ is a normal subgroup of $G$.
\end{prop}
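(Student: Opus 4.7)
The plan is a direct verification: show that $\mathrm{Ann}_G(M)$ contains $e$, is closed under products and inverses, and is stable under conjugation by arbitrary elements of $G$, using only the $G$-module axioms (i)--(iv). I do not expect simplicity of $M$ to play any role; the same argument should work for an arbitrary $G$-module.

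First I would fix the interpretation of the defining condition. Taken literally, $gm=0$ for all $m\in M$ together with axiom (iii) would force $em=0$ and hence $M=0$; so to keep the statement substantive one reads $\mathrm{Ann}_G(M)$ as the kernel of the action homomorphism $G\to \mathrm{Aut}(M)$, that is, as $\{g\in G\mid gm=m\text{ for all }m\in M\}$. With this understood, $e\in \mathrm{Ann}_G(M)$ is immediate from (iii). Closure under products uses (ii): if $g,g'\in \mathrm{Ann}_G(M)$ and $m\in M$, then $(gg')m=g(g'm)=gm=m$. Closure under inverses combines (ii) and (iii): from $gm=m$ one obtains $g^{-1}m=g^{-1}(gm)=(g^{-1}g)m=em=m$.

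Finally, for normality I would take arbitrary $h\in G$ and $g\in \mathrm{Ann}_G(M)$ and apply (ii) twice: $(hgh^{-1})m=h(g(h^{-1}m))=h(h^{-1}m)=m$, the middle equality being valid because $h^{-1}m$ is again an element of $M$, on which $g$ acts trivially. The only real obstacle is interpretive rather than computational, namely pinning down the correct reading of the defining relation; once that is settled, each of the four verifications (identity, product, inverse, conjugation) is a one-line application of the axioms, and the assumption that $M$ is simple is nowhere used.
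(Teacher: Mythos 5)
Your verification is correct, and it is essentially the only natural argument: the paper offers no proof at all (the proposition is prefaced by ``Clearly''), and the four one-line checks you perform --- identity via (iii), products via (ii), inverses via (ii) and (iii), conjugation via (ii) twice --- are exactly what is needed. Your interpretive remark is not a side issue but a genuine catch: under the paper's literal definition $\mathrm{Ann}_G(M)=\{g\mid gm=0\ \text{for all }m\}$, any such $g$ would force $m=(g^{-1}g)m=g^{-1}(gm)=g^{-1}0=0$ by (ii)--(iv), so for a nonzero simple module the set is empty and hence not a subgroup; reading the annihilator as the kernel of the action, $\{g\mid gm=m\ \text{for all }m\}$, is the repair that makes the proposition true and aligns with Jacobson's primitive ideals. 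You are also right that simplicity of $M$ plays no role here.
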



\begin{defn}\label{prns}
\emph{A normal subgroup $\mathfrak{p}$ of a group $G$ is called \emph{primitive} if there exists a simple $G$-module such that $\mathfrak{p}=\mathrm{Ann}_G(M).$ We shall denote the set of all primitive normal subgroups of $G$ by $\mathrm{Pmtv}_G$. }
\end{defn}

\begin{rem}
\emph{Note that the definition above is in alignment with the definition of a primitive ideal of a (noncommutative) ring, where instead of $G$-module, we take usual notion of a module over a ring (see \cite{J45, J56}).} 
\end{rem}  
   
Having fixed any of the above defined properties for the normal
subgroups of a group $G$, we define $\sigma_G$ as the set of all normal subgroups of
$G$ satisfying this property. We call $\sigma_G$ a \emph{spectrum} of $G$.  We assume $G\notin\sigma_G$ for all spectra of $G$. By a \emph{subspectrum} of a spectrum $\sigma_G$, we mean a spectrum that is also a subset of $\sigma_G$. 

\section{Normal structure spaces}

The coarse lower topology (also known as lower topology \cite{G et. al.}) on a spectrum $\sigma_G$ is the topology for which the sets of the type 
$$
\mathcal{V}(\mathfrak x)=\{\mathfrak n\in \sigma_G\mid \mathfrak x\subseteq \mathfrak n \}
\qquad (\mathfrak x\in \mathrm{N}_G)$$
form a subbasis of closed sets. Moreover, we define an (algebraic) \emph{closure} operator on a spectrum $\sigma_G$ as follows:
$$\mathrm{Cl}(X)=\{\mathfrak n\in \sigma_G\mid \cap X\subseteq \mathfrak n \}\qquad (X\subseteq \mathrm{N}_G).$$

\begin{defn}\label{nss}
\emph{A spectrum $\sigma_G$  endowed with a coarse lower topology will be called a \emph{normal structure space}}
\end{defn}

We use the  same notation $\sigma_G$ to denote the space. In general, the collection of subsets $\{\mathcal{V}(\mathfrak{x})\}_{\mathfrak{x}\in \mathrm{N}_G}$ is not closed under finite unions. However the following result characterizes a spectrum $\sigma_G$ on which each member of the collection is indeed a closed set, \textit{i.e.}, $\{\mathcal{V}(\mathfrak{x})\}_{\mathfrak{x}\in \mathrm{N}_G}$ induces a hull-kernel topology on such a $\sigma_G$. A ring-theoretic version of this result was first proved in \cite[p.\,11]{M53}.  

\begin{thm}
\label{hkt}
The collection of sets $\{\mathcal{V}(\mathfrak{x})\}_{\mathfrak{x}\in \mathrm{N}_G}$   induces a hull-kernel topology on a spectrum $\sigma_G$ if and only if $\sigma_G$ has the following property:    
\begin{equation}
\mathfrak{n}\cap \mathfrak{n}'\subseteq \mathfrak{s}\quad \text{implies}\quad  \mathfrak{n}\subseteq \mathfrak{s}\;\; \text{or}\;\; \mathfrak{n}'\subseteq \mathfrak{s} \label{mip}
\end{equation}
for all $\mathfrak{n},$ $\mathfrak{n}'\in \mathrm{N}_G$ and for all $\mathfrak{s}\in  \sigma_G.$  
\end{thm}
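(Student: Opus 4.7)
My plan is to reduce the theorem to the single set-identity
\[
\mathcal{V}(\mathfrak{n})\cup\mathcal{V}(\mathfrak{n}')=\mathcal{V}(\mathfrak{n}\cap\mathfrak{n}')\qquad (\mathfrak{n},\mathfrak{n}'\in\mathrm{N}_G).
\]
One inclusion, $\mathcal{V}(\mathfrak{n})\cup\mathcal{V}(\mathfrak{n}')\subseteq\mathcal{V}(\mathfrak{n}\cap\mathfrak{n}')$, is automatic from $\mathfrak{n}\cap\mathfrak{n}'\subseteq\mathfrak{n},\mathfrak{n}'$. The reverse inclusion is a verbatim restatement of (\ref{mip}): an element $\mathfrak{s}\in\sigma_G$ lies in $\mathcal{V}(\mathfrak{n}\cap\mathfrak{n}')$ precisely when $\mathfrak{n}\cap\mathfrak{n}'\subseteq\mathfrak{s}$, and the conclusion of (\ref{mip}) is exactly that such an $\mathfrak{s}$ belongs to $\mathcal{V}(\mathfrak{n})\cup\mathcal{V}(\mathfrak{n}')$. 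Thus (\ref{mip}) holds if and only if the displayed identity does, and the whole theorem is an exercise in promoting this identity to the hull-kernel structure.

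For the $(\Leftarrow)$ direction I would use the identity to extend the subbasis to the full lattice of closed sets. By induction on $k$, any finite union $\mathcal{V}(\mathfrak{x}_1)\cup\cdots\cup\mathcal{V}(\mathfrak{x}_k)$ equals $\mathcal{V}(\mathfrak{x}_1\cap\cdots\cap\mathfrak{x}_k)$, so $\{\mathcal{V}(\mathfrak{x})\}_{\mathfrak{x}\in\mathrm{N}_G}$ is closed under finite unions. It is also closed under arbitrary intersections via $\bigcap_\lambda\mathcal{V}(\mathfrak{x}_\lambda)=\mathcal{V}\bigl(\sum_\lambda\mathfrak{x}_\lambda\bigr)$, and contains $\emptyset=\mathcal{V}(G)$ (valid because $G\notin\sigma_G$) and $\sigma_G=\mathcal{V}(\mathfrak{e})$. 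Hence the subbasis already coincides with the family of closed sets, and the coarse lower topology is the hull-kernel topology on $\sigma_G$.

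For the $(\Rightarrow)$ direction I would run the argument in reverse, starting from the hypothesis that $\{\mathcal{V}(\mathfrak{x})\}_{\mathfrak{x}\in\mathrm{N}_G}$ is the family of closed sets of a hull-kernel topology and invoking the explicit closure operator $\mathrm{Cl}(X)=\{\mathfrak{n}\in\sigma_G\mid\cap X\subseteq\mathfrak{n}\}$ recorded in the paper. Applied to $\mathcal{V}(\mathfrak{n})\cup\mathcal{V}(\mathfrak{n}')$, the left-hand side is already closed and so equals its own closure; the right-hand side, read as the hull of the common kernel $\mathfrak{n}\cap\mathfrak{n}'$, equals $\mathcal{V}(\mathfrak{n}\cap\mathfrak{n}')$. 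Once this puts the displayed identity in hand, (\ref{mip}) is immediate: any $\mathfrak{s}\in\sigma_G$ with $\mathfrak{n}\cap\mathfrak{n}'\subseteq\mathfrak{s}$ lies in $\mathcal{V}(\mathfrak{n}\cap\mathfrak{n}')=\mathcal{V}(\mathfrak{n})\cup\mathcal{V}(\mathfrak{n}')$, hence is contained in one of $\mathcal{V}(\mathfrak{n}),\mathcal{V}(\mathfrak{n}')$.

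The delicate point, and where I would be most careful, is the $(\Rightarrow)$ direction: from ``$\mathcal{V}(\mathfrak{n})\cup\mathcal{V}(\mathfrak{n}')$ is some $\mathcal{V}(\mathfrak{y})$'' it is not tautological that one may replace $\mathfrak{y}$ by $\mathfrak{n}\cap\mathfrak{n}'$. The cleanest way around this is to insist that ``induces a hull-kernel topology'' means the topological closure agrees with the operator $\mathrm{Cl}$ on all subsets of $\sigma_G$, so that the kernel of the closed set $\mathcal{V}(\mathfrak{n})\cup\mathcal{V}(\mathfrak{n}')$ controls its hull; once this is granted, the computation of both sides is forced. Modulo this book-keeping, the whole statement is a transparent reformulation of the commutative-ring lemma from \cite{M53} cited in the paper.
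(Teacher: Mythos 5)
The paper actually states Theorem \ref{hkt} without proof, deferring to the ring-theoretic version in \cite{M53}, so there is no in-paper argument to measure you against; your proposal has to stand on its own. Your reduction of condition (\ref{mip}) to the identity $\mathcal{V}(\mathfrak{n})\cup\mathcal{V}(\mathfrak{n}')=\mathcal{V}(\mathfrak{n}\cap\mathfrak{n}')$ is correct, and your $(\Leftarrow)$ direction is complete: under (\ref{mip}) the family $\{\mathcal{V}(\mathfrak{x})\}_{\mathfrak{x}\in\mathrm{N}_G}$ is closed under finite unions and arbitrary intersections and contains $\emptyset=\mathcal{V}(G)$ and $\sigma_G=\mathcal{V}(\mathfrak{e})$, so it is the full lattice of closed sets and the coarse lower topology is the hull-kernel topology. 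That half is fine.

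The gap is in $(\Rightarrow)$, precisely at the point you flag, and the repair you propose does not close it. Writing $k(X)=\bigcap_{\mathfrak{s}\in X}\mathfrak{s}$, the operator $\mathrm{Cl}$ applied to $\mathcal{V}(\mathfrak{n})\cup\mathcal{V}(\mathfrak{n}')$ returns $\mathcal{V}\bigl(k(\mathcal{V}(\mathfrak{n}))\cap k(\mathcal{V}(\mathfrak{n}'))\bigr)$, not $\mathcal{V}(\mathfrak{n}\cap\mathfrak{n}')$: the ``common kernel'' of that set is $\sqrt{\mathfrak{n}}^{\omega}\cap\sqrt{\mathfrak{n}'}^{\omega}$ in the notation of Definition \ref{grad}, which contains $\mathfrak{n}\cap\mathfrak{n}'$ but is in general strictly larger (it is all of $G$ when the $\mathcal{V}$-sets are empty). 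So even granting your convention that the topological closure agrees with $\mathrm{Cl}$ on all subsets, the computation only yields (\ref{mip}) for pairs of the form $\sqrt{\mathfrak{n}}^{\omega},\sqrt{\mathfrak{n}'}^{\omega}$, i.e.\ for kernels of subsets of $\sigma_G$, which is strictly weaker than (\ref{mip}) quantified over all of $\mathrm{N}_G$. A minimal illustration (not one of the paper's named spectra, but enough to show the implication is not formal): $G=(\mathbb{Z},+)$ and $\sigma_G=\{6\mathbb{Z}\}$. Here the sets $\mathcal{V}(\mathfrak{x})$ are exactly $\emptyset$ and $\{6\mathbb{Z}\}$, they form the closed sets of a topology whose closure operator is $\mathrm{Cl}$, yet $2\mathbb{Z}\cap 3\mathbb{Z}\subseteq 6\mathbb{Z}$ while neither $2\mathbb{Z}$ nor $3\mathbb{Z}$ is contained in $6\mathbb{Z}$. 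The upshot is that the forward implication cannot be extracted from ``the $\mathcal{V}(\mathfrak{x})$ form a closed-set lattice with closure $\mathrm{Cl}$''; it holds only if ``induces a hull-kernel topology'' is read as the assertion that $\mathcal{V}$ carries finite intersections of normal subgroups to finite unions --- at which point $(\Rightarrow)$ is exactly the tautology of your first paragraph. You should either adopt that reading explicitly, or weaken (\ref{mip}) so that $\mathfrak{n},\mathfrak{n}'$ range only over kernels $\sqrt{\mathfrak{x}}^{\omega}$; as stated, your argument for $(\Rightarrow)$ does not go through.
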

 
The `largest' spectrum that satisfies the condition \eqref{mip} is  nothing but $\mathrm{Irr}^+_G,$ the spectrum of strongly irreducible normal subgroups. From (\ref{psi}) it follows that $\mathrm{Spec}_G$ and $\mathrm{MinSpec}_G$ also satisfy the condition \eqref{mip}, and hence these are the examples of spectra on which we can define hull-kernel  topology. These facts summarizes in the following
 
\begin{cor}   
If $\sigma_G$ is $\mathrm{Irr}^+_{G}$ or its subspectra, then the coarse lower topology on it coincide with the corresponding hull-kernel topology.
\end{cor}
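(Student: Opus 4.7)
The strategy is to read the corollary as a direct application of Theorem \ref{hkt}, which characterises the spectra on which the coarse lower topology is already a hull-kernel topology as exactly those satisfying condition \eqref{mip}. Thus it suffices to verify that $\mathrm{Irr}^+_G$ itself satisfies \eqref{mip} and then observe that the property passes to subspectra.

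First I would take an arbitrary $\mathfrak{s}\in \mathrm{Irr}^+_G$ and $\mathfrak{n},\mathfrak{n}'\in \mathrm{N}_G$ with $\mathfrak{n}\cap\mathfrak{n}'\subseteq \mathfrak{s}$. By the very definition of a strongly irreducible normal subgroup, this inclusion forces $\mathfrak{n}\subseteq \mathfrak{s}$ or $\mathfrak{n}'\subseteq \mathfrak{s}$, which is precisely \eqref{mip}. Applying Theorem \ref{hkt} to $\sigma_G = \mathrm{Irr}^+_G$ then yields the claim for the largest case. For a subspectrum $\sigma'_G \subseteq \mathrm{Irr}^+_G$, I would simply note that \eqref{mip} is universally quantified over $\mathfrak{s}\in\sigma_G$, so any restriction of the ambient spectrum still satisfies it, and Theorem \ref{hkt} can be invoked again.

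There is no genuine obstacle here; the work is entirely absorbed into Theorem \ref{hkt} and the definition of $\mathrm{Irr}^+_G$. The only point worth emphasising in the write-up is why this corollary is a meaningful consequence: combined with the inclusion \eqref{psi}, it explains uniformly why $\mathrm{Spec}_G$, $\mathrm{MinSpec}_G$, and any other class of normal subgroups lying inside $\mathrm{Irr}^+_G$ automatically inherit a hull-kernel topology, which is the feature we will rely on throughout the subsequent topological analysis.
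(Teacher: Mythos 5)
Your proof is correct and is essentially the paper's own (implicit) argument: the corollary is presented as an immediate summary of Theorem \ref{hkt} together with the observations that strongly irreducible normal subgroups satisfy \eqref{mip} by definition and that \eqref{mip}, being universally quantified over $\mathfrak{s}\in\sigma_G$, is inherited by every subspectrum. One small caveat: the paper's printed definition of strongly irreducible reads $\mathfrak{n}\subseteq\mathfrak{a}\cap\mathfrak{b}$ implies $\mathfrak{n}\subseteq\mathfrak{a}$ or $\mathfrak{n}\subseteq\mathfrak{b}$, which is vacuously true of every normal subgroup and is evidently a typo for the standard condition $\mathfrak{a}\cap\mathfrak{b}\subseteq\mathfrak{n}$ implies $\mathfrak{a}\subseteq\mathfrak{n}$ or $\mathfrak{b}\subseteq\mathfrak{n}$; your argument correctly uses the intended condition, which is exactly \eqref{mip}.
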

 
Here are some examples of spectra that do not satisfy the condition (\ref{mip}) and hence one can not impose hull-kernel topologies on them. 

\begin{exam}
\emph{Suppose that  $G=(\mathbb{Z},+)$ and consider $\mathfrak{n}=2\mathbb{Z},$ $\mathfrak{n}'=3\mathbb{Z},$ and $\mathfrak{s}=6\mathbb{Z}.$ Now $2\mathbb{Z}\cap 3\mathbb{Z}=6\mathbb{Z}\subseteq 6\mathbb{Z},$ but $2\mathbb{Z}\nsubseteq 6\mathbb{Z}$ and $3\mathbb{Z}\nsubseteq 6\mathbb{Z}.$ This shows that the spectra $\mathrm{Prop}_G$, $\mathrm{Prin}_G$ and $\mathrm{Fgen}_G$ do not satisfy the condition (\ref{mip}).  }
\end{exam}
 
We now study various topological properties of normal structure spaces. Some of the proofs of these results are similar to the corresponding results of rings, whereas some require additional assumptions or even do not hold at all in the context of groups. As we pass on we will highlight on this comparison with rings.  
We start with continuous maps between normal structure spaces. Observe that although inverse image of a normal subgroup under a group homomorphism is normal, but the same may not hold for an arbitrary spectra $\sigma_G$. To resolve this problem, we need to impose that property on a spectrum. Hence we have the following

\begin{defn}
\emph{We say a spectrum $\sigma_G$ satisfies the \emph{contraction} property if for any group homomorphism $\phi\colon G\to G',$ the inverse image  $\phi\inv(\mathfrak{n}')$ is in $\sigma_G$, whenever $\mathfrak{n}'$ is in $\sigma_{G'}.$ }
\end{defn}

It has been shown in \cite[Lemma 2.2]{FGT22} that $\mathrm{Spec}_G$ has the contraction property.
Since the sets $\{\mathcal{V}(\mathfrak{x})\}_{\mathfrak{x}\in \mathrm{N}_G}$ only form a (closed) subbasis, all our arguments need to be at this level rather than just closed sets.

\begin{prop}\label{conmap}
Let $\sigma_G$ be a spectrum satisfying the contraction property. Let $\phi\colon G\to G'$ be a group homomorphism  and $\mathfrak{n}'\in\sigma_{G'}.$ 
\begin{enumerate}
\item\label{contxr} The induced map $\phi_*\colon  \sigma_{G'}\to \sigma_G$ defined by  $\phi_*(\mathfrak{n}')=\phi\inv(\mathfrak{n}')$ is    continuous.

\item If $\phi$ is  surjective, then the normal structure space $\sigma_{G'}$ is homeomorphic to the closed subspace $\mathcal{V}(\mathrm{Ker}(\phi))$ of the normal structure space $\sigma_G.$

\item\label{den} The subset  $\phi_*(\sigma_{G'})$ is dense in $\sigma_G$ if and only if $\mathrm{Ker}(\phi)\subseteq \bigcap_{\mathfrak{s}\in \sigma_G}\mathfrak{s}.$ 
\end{enumerate}
\end{prop}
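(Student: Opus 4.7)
My plan is to dispatch the three parts separately, working throughout at the level of the subbasis $\{\mathcal{V}(\mathfrak{x})\}_{\mathfrak{x}\in \mathrm{N}_G}$ as the paper emphasizes. For \emph{part (1)}, I would check continuity of $\phi_*$ by computing the preimage of an arbitrary subbasic closed set:
\[
\phi_*\inv(\mathcal{V}(\mathfrak{x})) = \{\mathfrak{n}'\in \sigma_{G'}\mid \mathfrak{x}\subseteq \phi\inv(\mathfrak{n}')\} = \{\mathfrak{n}'\in \sigma_{G'}\mid \phi(\mathfrak{x})\subseteq \mathfrak{n}'\} = \mathcal{V}(\mathfrak{y}'),
\]
where $\mathfrak{y}'$ denotes the normal subgroup of $G'$ generated by $\phi(\mathfrak{x})$; the middle equality uses normality of $\mathfrak{n}'$, so that $\phi(\mathfrak{x})\subseteq \mathfrak{n}'$ iff $\mathfrak{y}'\subseteq \mathfrak{n}'$. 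Since $\mathcal{V}(\mathfrak{y}')$ is subbasic closed in $\sigma_{G'}$, continuity follows.

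For \emph{part (2)}, set $K=\mathrm{Ker}(\phi)$. The classical correspondence theorem gives an inclusion-preserving bijection $\mathfrak{s}\leftrightarrow \phi(\mathfrak{s})$, $\mathfrak{n}'\leftrightarrow \phi\inv(\mathfrak{n}')$ between normal subgroups of $G$ containing $K$ and normal subgroups of $G'$. The contraction hypothesis sends $\sigma_{G'}$ into $\sigma_G$ via $\phi\inv$; the reverse transport $\phi(\mathfrak{s})\in \sigma_{G'}$ whenever $\mathfrak{s}\in \sigma_G$ and $K\subseteq \mathfrak{s}$ is a standard check for the spectra considered here. Together they yield a bijection $\phi_*\colon \sigma_{G'}\to \mathcal{V}(K)$, which is continuous by (1). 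For the inverse $\mathfrak{s}\mapsto \phi(\mathfrak{s})$, the analogous subbasis calculation gives
\[
\{\mathfrak{s}\in \mathcal{V}(K)\mid \mathfrak{y}'\subseteq \phi(\mathfrak{s})\} = \{\mathfrak{s}\in \mathcal{V}(K)\mid \phi\inv(\mathfrak{y}')\subseteq \mathfrak{s}\} = \mathcal{V}(\phi\inv(\mathfrak{y}')),
\]
using $\phi\inv(\phi(\mathfrak{s}))=\mathfrak{s}$ when $K\subseteq \mathfrak{s}$; since $K\subseteq \phi\inv(\mathfrak{y}')$, the last set already lies inside $\mathcal{V}(K)$. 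Thus $\phi_*$ is a homeomorphism onto $\mathcal{V}(K)$.

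For \emph{part (3)}, the ``only if'' direction is immediate: $\phi_*(\sigma_{G'})\subseteq \mathcal{V}(K)$ because every $\phi\inv(\mathfrak{n}')$ contains $K$, and $\mathcal{V}(K)$ is closed, so density forces $\mathcal{V}(K)=\sigma_G$, i.e., $K\subseteq \bigcap_{\mathfrak{s}\in\sigma_G}\mathfrak{s}$. For the converse, assuming $K\subseteq \bigcap\sigma_G$, my plan is to identify the topological closure of $\phi_*(\sigma_{G'})$ with the algebraic closure $\mathrm{Cl}(\phi_*(\sigma_{G'}))=\mathcal{V}(\phi\inv(\bigcap \sigma_{G'}))$---a legitimate identification at least when $\sigma_G$ satisfies the hull-kernel condition \eqref{mip}, as holds for $\mathrm{Spec}_G$ and its subspectra---so that density reduces to the purely algebraic statement $\phi\inv(\bigcap \sigma_{G'})\subseteq \bigcap \sigma_G$. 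The main obstacle is exactly this last step; the plan is to invoke the group-theoretic counterpart of the ring identity $\phi\inv(\sqrt{0})=\sqrt{\mathrm{Ker}(\phi)}$, namely $\phi\inv(\sqrt{\mathfrak{n}'})=\sqrt{\phi\inv(\mathfrak{n}')}$, provable for prime spectra via Kaplansky's $m$-system characterization in \cite{K64}. Combined with the monotonicity and idempotence of $\sqrt{\cdot}$ recalled earlier, this yields $\phi\inv(\bigcap\sigma_{G'})=\sqrt{K}\subseteq \sqrt{\bigcap\sigma_G}=\bigcap\sigma_G$, completing the converse.
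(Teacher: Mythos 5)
Your part (1) is the paper's argument verbatim: pull back a subbasic closed set and identify it with $\mathcal{V}(\langle\phi(\mathfrak{x})\rangle)$. For part (2) you take a mildly different but equivalent route: you construct the inverse $\mathfrak{s}\mapsto\phi(\mathfrak{s})$ explicitly via the correspondence theorem and check its continuity on the subbasis, whereas the paper shows instead that $\phi_*$ is an injective, continuous, \emph{closed} map with image $\mathcal{V}(\mathrm{Ker}(\phi))$. Both arguments hinge on the same point that neither actually derives from the hypotheses: that $\phi(\mathfrak{s})\in\sigma_{G'}$ for every $\mathfrak{s}\in\sigma_G$ containing $\mathrm{Ker}(\phi)$ (the paper needs it for surjectivity of $\phi_*$ onto $\mathcal{V}(\mathrm{Ker}(\phi))$, which it asserts without proof; you need it for your inverse to be defined). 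The contraction property only closes spectra under inverse images, so this ``expansion'' property is a genuinely additional assumption --- you at least flag it, but it should be stated, since for an arbitrary spectrum it can fail.

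For part (3) your ``only if'' direction matches the paper and is clean. In the converse you have correctly isolated the step the paper's own proof glosses over: the paper computes the algebraic closure operator $\mathrm{Cl}$ on $\phi_*(\sigma_{G'})$ and treats it as the topological closure, and its claimed identity $\mathrm{Cl}(\phi_*(\sigma_{G'}))=\mathcal{V}(\mathrm{Ker}(\phi))$ is exactly the inclusion $\phi\inv\bigl(\bigcap\sigma_{G'}\bigr)\subseteq\bigcap\sigma_G$ that you name as the main obstacle. Your patch (the radical identity $\phi\inv(\sqrt{\mathfrak{e}'})=\sqrt{\mathrm{Ker}(\phi)}$ together with monotonicity and idempotence of $\sqrt{\cdot}$) does close this for $\mathrm{Spec}_G$, where moreover condition \eqref{mip} holds, every closed set is of the form $\mathcal{V}(\mathfrak{x})$, and hence $\mathrm{Cl}$ really is the topological closure. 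But the proposition is stated for an arbitrary spectrum with the contraction property, and in that generality your argument does not deliver the ``if'' direction: without \eqref{mip} the coarse-lower-topology closure of $\phi_*(\sigma_{G'})$ may be strictly smaller than $\mathrm{Cl}(\phi_*(\sigma_{G'}))$, and $\bigcap\sigma_{G'}$ need not be a radical subgroup, so the chain $\phi\inv(\bigcap\sigma_{G'})=\sqrt{\mathrm{Ker}(\phi)}\subseteq\bigcap\sigma_G$ has no analogue. The residual gap is thus one of generality; within $\mathrm{Spec}_G$ and its subspectra your proof is in fact more complete than the paper's.
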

   
\begin{proof}      
To show (i), let $\mathfrak{x}\in \mathrm{N}_G$ and $\mathcal{V}(\mathfrak{x})$ be a   subbasic closed set of the normal structure  space $\sigma_G.$ Then  
\begin{align*}
(\phi_*)\inv(\mathcal{V}(\mathfrak{x})) &=\{ \mathfrak{n}'\in  \sigma_{G'}\mid \phi\inv(\mathfrak{n}')\in \mathcal{V}(\mathfrak{x})\}\\&=\{\mathfrak{n}'\in \sigma_{G'}\mid \phi(\mathfrak{x})\subseteq \mathfrak{n}'\}\\&=\mathcal{V}(\langle\phi(\mathfrak{x})\rangle), 
\end{align*} 
and hence the map $\phi_*$  continuous.     
For (ii), observe that $\mathrm{Ker}(\phi)\subseteq \phi\inv(\mathfrak{n}')$ follows from the fact that  $\mathfrak{e}\subseteq \mathfrak{n}'$ for all $\mathfrak{n}'\in \sigma_{G'}.$ It can thus been seen that $\phi_*(\mathfrak{n}')\in \mathcal{V}(\mathrm{Ker}(\phi)),$ and hence $\mathrm{Im}(\phi_*)=\mathcal{V}(\mathrm{Ker}(\phi)).$  
If $\mathfrak{n}'\in \sigma_{G'},$ then
\begin{align*}
\phi(\phi_*(\mathfrak{n}'))&=\phi(\phi\inv(\mathfrak{n}'))=\mathfrak{n}'\cap G'=\mathfrak{n}'.
\end{align*}
Thus $\phi_*$ is injective. To show that $\phi_*$ is closed, first we observe that for any   subbasic closed subset  $\mathcal{V}(\mathfrak{a})$ of  $\sigma_{G'}$, we have
\begin{align*}
\phi_*(\mathcal{V}(\mathfrak{x}))&=  \phi\inv(\mathcal{V}(\mathfrak{x}))\\&=\phi\inv\{ \mathfrak{n}'\in \sigma_{G'}\mid \mathfrak{x}\subseteq   \mathfrak{n}'\}\\&=\mathcal{V}(\phi\inv(\mathfrak{x})). 
\end{align*}
Now if $K$ is a closed subset of $\sigma_{G'}$ and $K=\bigcap_{ \lambda \in \Omega} (\bigcup_{ i \,= 1}^{ n_{\lambda}} \mathcal{V}(\mathfrak{x}_{ i\lambda})),$ then
\begin{align*}
\phi_*(K)&=\phi\inv \left(\bigcap_{ \lambda \in \Omega} \left(\bigcup_{ i = 1}^{ n_{\lambda}} \mathcal{V}(\mathfrak{x}_{ i\lambda})\right)\right)\\&=\bigcap_{ \lambda \in \Omega} \bigcup_{ i = 1}^{x_{\lambda}} \mathcal{V}(\phi\inv(\mathfrak{x}_{ i\lambda}))
\end{align*}
a closed subset of  $\sigma_G.$ Since by (\ref{contxr}), $\phi_*$ is continuous, we have the desired claim.
Finally to prove (iii), first we wish to show: $\mathrm{Cl}(\phi_*(\mathcal{V}(\mathfrak{n}')))=\mathcal{V}(\phi\inv(\mathfrak{n}'))$ for all $\mathfrak{n}'\in \mathrm{N}_{G'}.$ For that, let $\mathfrak{s}\in \phi_*(\mathcal{V}(\mathfrak{n}')).$ This implies $\phi(\mathfrak{s})\in \mathcal{V}(\mathfrak{n}'),$ and that means $\mathfrak{n}'\subseteq \phi(\mathfrak{s}).$ Therefore, $\mathfrak{s}\in \mathcal{V}(\phi\inv(\mathfrak{n}')).$ Since $\phi\inv(\mathcal{V}(\mathfrak{n}'))=\mathcal{V}(\phi\inv(\mathfrak{n}'))$, the other inclusion follows. If we take $\mathfrak{n}'$ as the trivial normal subgroup $\mathfrak{e}'$ of $G'$, the above identity becomes
$$\mathrm{Cl}(\phi_*(\sigma_{G'}))=\phi_*(\mathcal{V}(\mathfrak{e}))=\mathcal{V}(\phi\inv(\mathfrak{e}))=\mathcal{V}(\mathrm{Ker}(\phi)),$$ and hence  $\mathcal{V}(\mathrm{Ker}(\phi))$ to be equal to $\sigma_G$ if and only if $\mathrm{Ker}(\phi)\subseteq \bigcap_{\mathfrak{s}\in \sigma_G}\mathfrak{s}.$ 
\end{proof}  

For $\mathrm{Spec}_G$, the inclusion condition in (\ref{den}) is replaced by an equality.  If $\phi$ is the quotient map $G\to G/\mathfrak{n}$, then we have the following

\begin{cor}
If $\sigma_G$ is a spectrum of $G$ satisfying the contraction property, then the normal structure space $\sigma_{{G/\mathfrak{n}}}$ is homeomorphic to the closed subspace
$\mathcal{V}(\mathfrak{n})$ of $\sigma_G$ for every $\mathfrak{n}\in \mathrm{N}_G$.   
\end{cor}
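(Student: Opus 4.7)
The plan is to obtain this corollary as an immediate specialization of Proposition~\ref{conmap}(ii) applied to the canonical quotient homomorphism $\pi\colon G \to G/\mathfrak{n}$. First I would verify the hypotheses: $\pi$ is a surjective group homomorphism, and $\sigma_G$ satisfies the contraction property by assumption. The contraction property, being stated for \emph{any} group homomorphism out of $G$, applies in particular to $\pi$, so the induced map $\pi_*\colon \sigma_{G/\mathfrak{n}} \to \sigma_G$ given by $\pi_*(\mathfrak{n}') = \pi\inv(\mathfrak{n}')$ is well-defined and takes values in $\sigma_G$.

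Next, invoking Proposition~\ref{conmap}(ii) directly yields a homeomorphism $\pi_*\colon \sigma_{G/\mathfrak{n}} \to \mathcal{V}(\mathrm{Ker}(\pi))$ onto its image, the latter being a closed subspace of $\sigma_G$. Since $\mathrm{Ker}(\pi) = \mathfrak{n}$ by construction of the quotient map, this identifies $\sigma_{G/\mathfrak{n}}$ with $\mathcal{V}(\mathfrak{n})$, which is precisely the statement of the corollary.

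There is no substantive obstacle here; the corollary is a one-line instantiation of the preceding proposition. The only conceptual check worth emphasizing is that what is denoted $\sigma_{G/\mathfrak{n}}$ — the spectrum of the quotient group defined by the same distinguishing property used to specify $\sigma_G$ — meshes with $\sigma_G$ through $\pi_*$, and this compatibility is exactly what the contraction hypothesis encodes.
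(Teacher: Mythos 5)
Your proposal is correct and is exactly the paper's route: the corollary is introduced with the remark that $\phi$ is taken to be the quotient map $G\to G/\mathfrak{n}$, so it follows directly from Proposition~\ref{conmap}(ii) with $\mathrm{Ker}(\phi)=\mathfrak{n}$. Nothing further is needed.
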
 

Let us recall that a nonempty closed subset $K$ of a topological space $X$ is \emph{irreducible} if $K\neq K_{\scriptscriptstyle 1}\cup K_{\scriptscriptstyle 2}$ for any two proper closed subsets  $K_{\scriptscriptstyle 1}, K_{\scriptscriptstyle 2}$ of $K$. A point $x$ in a closed subset $K$ is called a \emph{generic point} of $K$ if $K = \mathrm{Cl}(x).$  

\begin{thm}\label{irrc}  
In every normal structure space $\sigma_G$, the subbasic closed sets of the form: $$\{\mathcal{V}(\mathfrak{n})\mid \mathfrak{n}\in \mathcal{V}(\mathfrak{n})\}$$ are irreducible. 
\end{thm}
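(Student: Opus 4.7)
The plan is to prove that every such $\mathcal V(\mathfrak n)$ admits $\mathfrak n$ itself as a generic point; once this is established, irreducibility is automatic.

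First, I would unwind the notation: the condition $\mathfrak n\in \mathcal V(\mathfrak n)$ simply forces $\mathfrak n\in\sigma_G$, since $\mathfrak n\subseteq\mathfrak n$ holds trivially. So the claim is that for every $\mathfrak n\in\sigma_G$, the closed set $\mathcal V(\mathfrak n)$ is irreducible.

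Next, I would show $\overline{\{\mathfrak n\}}=\mathcal V(\mathfrak n)$, where the overline denotes the topological closure in the coarse lower topology. The inclusion $\overline{\{\mathfrak n\}}\subseteq \mathcal V(\mathfrak n)$ is immediate because $\mathcal V(\mathfrak n)$ is a closed set containing $\mathfrak n$. For the reverse inclusion, let $C$ be an arbitrary closed subset of $\sigma_G$ containing $\mathfrak n$; write it in terms of the subbasis as
\[
C=\bigcap_{\lambda\in\Omega}\bigcup_{i=1}^{n_\lambda}\mathcal V(\mathfrak x_{i\lambda}).
\]
Since $\mathfrak n\in C$, for each $\lambda$ there is an index $i_\lambda$ with $\mathfrak x_{i_\lambda\lambda}\subseteq\mathfrak n$. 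Given any $\mathfrak s\in \mathcal V(\mathfrak n)$, transitivity of inclusion gives $\mathfrak x_{i_\lambda\lambda}\subseteq\mathfrak s$, hence $\mathfrak s\in\mathcal V(\mathfrak x_{i_\lambda\lambda})\subseteq \bigcup_i\mathcal V(\mathfrak x_{i\lambda})$ for every $\lambda$, so $\mathfrak s\in C$. Therefore $\mathcal V(\mathfrak n)\subseteq C$ for every closed set $C$ containing $\mathfrak n$, which proves $\mathcal V(\mathfrak n)\subseteq\overline{\{\mathfrak n\}}$.

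Finally, I would invoke the general topological fact that any closed set possessing a generic point is irreducible: if $\mathcal V(\mathfrak n)=K_1\cup K_2$ with $K_1,K_2$ closed, then $\mathfrak n\in K_i$ for some $i$, whence $\mathcal V(\mathfrak n)=\overline{\{\mathfrak n\}}\subseteq K_i$, forcing $K_i=\mathcal V(\mathfrak n)$. The one subtle step is the closure computation, where the main care needed is to handle the fact that the $\mathcal V(\mathfrak x)$ form only a subbasis, so one must parse a general closed set as an intersection of finite unions of subbasic closed sets; no genuine obstacle arises, however, since the key is simply choosing, for each $\lambda$, one subbasic piece of the finite union that already contains $\mathfrak n$ and then transporting the inclusion along $\mathfrak n\subseteq\mathfrak s$.
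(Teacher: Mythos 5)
Your proof is correct and follows essentially the same route as the paper: both establish that $\mathcal V(\mathfrak n)$ equals the closure of the point $\mathfrak n$ by decomposing a closed set into an intersection of finite unions of subbasic sets, selecting in each finite union one piece $\mathcal V(\mathfrak x)$ with $\mathfrak x\subseteq\mathfrak n$, and transporting the inclusion to every $\mathfrak s\supseteq\mathfrak n$. Your version is marginally cleaner in that it quantifies over an arbitrary closed set containing $\mathfrak n$ and so avoids the paper's case split on whether the closure is all of $\sigma_G$, but the underlying idea is identical.
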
 

\begin{proof} 
It is sufficient to show that $\mathcal{V}(\mathfrak{n})=\mathrm{Cl}(\mathfrak{n})$ whenever $\mathfrak{n}\in\mathcal{V}(\mathfrak{n})$. Since $\mathrm{Cl}(\mathfrak{n})$ is the smallest closed set containing $\mathfrak{n}$, and since $\mathcal{V}(\mathfrak{n})$ is a closed set containing $\mathfrak{n}$, obviously then  $\mathrm{Cl}(\mathfrak{n})\subseteq \mathcal{V}(\mathfrak{n})$. 
For the reverse inclusion, if $\mathrm{Cl}(\mathfrak{n})= \sigma_G$, then 
\[ 
\sigma_G=\mathrm{Cl}(\mathfrak{n})\subseteq \mathcal{V}(\mathfrak{n})\subseteq \sigma_G.
\] 
This proves that $\mathcal{V}(\mathfrak{n})=\mathrm{Cl}(\mathfrak{n})$. Suppose that $\mathrm{Cl}(\mathfrak{n})\neq \sigma_G$. Since $\mathrm{Cl}(\mathfrak{n})$ is a closed set,  there exists an  index set, $\Omega$, such that,  for each $\lambda\in\Omega$, there is a positive integer $x_{\lambda}$ and normal subgroups $\mathfrak{x}_{\lambda 1},\dots, \mathfrak{x}_{\lambda n_\lambda}$ of $G$ such that 
$$
\mathrm{Cl}(\mathfrak{n})={\bigcap_{\lambda\in\Omega}}\left({\bigcup_{ i\,=1}^{ x_\lambda}}\mathcal{V}(\mathfrak{x}_{\lambda i})\right).
$$
Since  
$\mathrm{Cl}(\mathfrak{n})\neq \sigma_G,$ we can assume that ${\bigcup_{ i\,=1}^{ x_\lambda}}\mathcal{V}(\mathfrak{x}_{\lambda i})$ is non-empty for each $\lambda$. Therefore, $\mathfrak{n}\in   {\bigcup_{ i\,=1}^{ x_\lambda}}\mathcal{V}(\mathfrak{x}_{\lambda i})$ for each $\lambda$, and hence $\mathcal{V}(\mathfrak{n})\subseteq {\bigcup_{ i=1}^{ x_\lambda}}\mathcal{V}(\mathfrak{x}_{\lambda i})$, that is $\mathcal{V}(\mathfrak{n})\subseteq \mathrm{Cl}(\mathfrak{n})$ as desired. 
\end{proof}     

\begin{cor}\label{spiir}
Every non-empty subbasic closed subset of $\mathrm{Prp}_G$ is irreducible.
\end{cor}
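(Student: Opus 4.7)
The plan is to derive the corollary as an immediate consequence of Theorem~\ref{irrc}. That theorem shows $\mathcal{V}(\mathfrak{n})$ is irreducible whenever the ``self-containment'' hypothesis $\mathfrak{n}\in\mathcal{V}(\mathfrak{n})$ holds, so it suffices to verify this hypothesis for every non-empty subbasic closed set of the specific spectrum $\mathrm{Prp}_G$.

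To that end, I would fix an arbitrary non-empty subbasic closed set of $\mathrm{Prp}_G$; by definition it has the form
$$\mathcal{V}(\mathfrak{x})=\{\mathfrak{n}\in\mathrm{Prp}_G\mid \mathfrak{x}\subseteq \mathfrak{n}\}$$
for some $\mathfrak{x}\in\mathrm{N}_G$. Picking any witness $\mathfrak{n}\in \mathcal{V}(\mathfrak{x})$, which exists by non-emptiness, I have $\mathfrak{x}\subseteq \mathfrak{n}\subsetneq G$, the proper containment being exactly what it means for $\mathfrak{n}$ to lie in $\mathrm{Prp}_G$. This forces $\mathfrak{x}\subsetneq G$, hence $\mathfrak{x}\in \mathrm{Prp}_G$. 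Since trivially $\mathfrak{x}\subseteq \mathfrak{x}$, we obtain $\mathfrak{x}\in\mathcal{V}(\mathfrak{x})$, and Theorem~\ref{irrc} then gives that $\mathcal{V}(\mathfrak{x})$ is irreducible.

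There is no real obstacle; the only conceptual point is that $\mathrm{Prp}_G$ is ``large enough'' for non-emptiness of a subbasic closed set to automatically drag $\mathfrak{x}$ itself into the spectrum. This is precisely what fails for narrower spectra such as $\mathrm{Spec}_G$ or $\mathrm{Prim}_G$, where one might have $\mathcal{V}(\mathfrak{x})\neq\emptyset$ without $\mathfrak{x}$ being prime or primary, so the corresponding analogue would require a separate argument rather than the one-line reduction used here.
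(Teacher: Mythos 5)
Your proof is correct and follows exactly the paper's route: both reduce to Theorem~\ref{irrc} by showing that non-emptiness of $\mathcal{V}(\mathfrak{x})$ forces $\mathfrak{x}$ to be a proper normal subgroup, hence $\mathfrak{x}\in\mathcal{V}(\mathfrak{x})$. You merely spell out the one-line justification that the paper leaves implicit.
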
 

\begin{proof}
Since for every non-empty subbasic closed subset $\mathcal{V}(\mathfrak{n})$ of $\mathrm{Prp}_G$, the normal subgroup $\mathfrak{n}$ is also in $\mathrm{Prp}_G,$ the proof now follows from Theorem \ref{irrc}.  
\end{proof} 

As hull-kernel spaces are sparse with closed sets, so are normal structure spaces. Nevertheless, we have the following separation axiom for all of them. 
 
\begin{thm}
 Every normal structure space $\sigma_G$ is   $T_{ 0}.$ 
\label{ct0t1}  
\end{thm}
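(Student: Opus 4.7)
The plan is to show directly that any two distinct points of $\sigma_G$ can be separated by a subbasic open set, using the subbasic closed sets $\mathcal{V}(\mathfrak{n})$ indexed by the points themselves. Recall that a space is $T_0$ iff for every pair of distinct points at least one lies in an open set not containing the other, equivalently, the topological closures of distinct singletons are distinct. Both perspectives work here; I would go with the direct separation argument.

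Let $\mathfrak{n},\mathfrak{n}'\in\sigma_G$ with $\mathfrak{n}\neq\mathfrak{n}'$. Since these are distinct normal subgroups of $G$, they are not mutually contained in each other, so without loss of generality we may assume $\mathfrak{n}\not\subseteq\mathfrak{n}'$ (if instead $\mathfrak{n}'\not\subseteq\mathfrak{n}$, swap the roles). Consider the subbasic closed set
\[
\mathcal{V}(\mathfrak{n})=\{\mathfrak{s}\in\sigma_G\mid\mathfrak{n}\subseteq\mathfrak{s}\}.
\]
Trivially $\mathfrak{n}\in\mathcal{V}(\mathfrak{n})$, while $\mathfrak{n}'\notin\mathcal{V}(\mathfrak{n})$ precisely because $\mathfrak{n}\not\subseteq\mathfrak{n}'$. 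Hence $\sigma_G\setminus\mathcal{V}(\mathfrak{n})$ is an open set in the coarse lower topology that contains $\mathfrak{n}'$ but excludes $\mathfrak{n}$, which is exactly what the $T_0$ axiom demands.

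Alternatively, one can phrase the argument via closures using Theorem~\ref{irrc}: the identity $\mathrm{Cl}(\mathfrak{n})=\mathcal{V}(\mathfrak{n})$ (valid because $\mathfrak{n}\in\mathcal{V}(\mathfrak{n})$ automatically) reduces $T_0$ to showing $\mathcal{V}(\mathfrak{n})=\mathcal{V}(\mathfrak{n}')$ forces $\mathfrak{n}=\mathfrak{n}'$. But this is immediate: if $\mathcal{V}(\mathfrak{n})=\mathcal{V}(\mathfrak{n}')$, then $\mathfrak{n}\in\mathcal{V}(\mathfrak{n}')$ and $\mathfrak{n}'\in\mathcal{V}(\mathfrak{n})$, giving $\mathfrak{n}'\subseteq\mathfrak{n}$ and $\mathfrak{n}\subseteq\mathfrak{n}'$ respectively.

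There is no real obstacle: the statement is essentially a tautology of the coarse lower topology, since the closed subbasis is parametrised by normal subgroups containing each fixed one. No property of the spectrum $\sigma_G$ beyond being a subset of $\mathrm{N}_G$ is invoked, which is why the conclusion holds uniformly for every normal structure space, in contrast to the stronger separation axioms that would require additional hypotheses on $\sigma_G$.
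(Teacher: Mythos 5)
Your proposal is correct and follows essentially the same argument as the paper: assume WLOG that $\mathfrak{n}\not\subseteq\mathfrak{n}'$ and observe that the subbasic closed set $\mathcal{V}(\mathfrak{n})$ contains $\mathfrak{n}$ but not $\mathfrak{n}'$. The alternative phrasing via closures is a harmless reformulation of the same idea.
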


\begin{proof} 
Let $\mathfrak{n}$ and $\mathfrak{n}'$ be two distinct elements of $\sigma_G$. Then, without loss of generality, we may assume that $\mathfrak{n}\nsubseteq \mathfrak{n}'$. Therefore $\mathcal{V}(\mathfrak{n})$ is a closed set containing $\mathfrak{n}$ and missing $\mathfrak{n}'$. Therefore $\sigma_G$ is a $T_0$-space.
\end{proof} 

Let $R$ be a ring. It is well known that prime spectrum $\mathrm{Spec}(R)$ endowed with with a Zariski topology is a $T_1$-space  if and only if $\mathrm{Spec}(R)$ coincides with the spectrum $\mathrm{Max}(R)$ of maximal ideals. For a normal structure space $\sigma_G$ to be $T_1$, the condition is slightly different.
 
\begin{thm}
Suppose that $G$ is a group in which $\mathrm{Max}_G$ exists. Then a normal structure space $\sigma_G$ is a $T_1$-space if and only if $\sigma_G\subseteq\mathrm{Max}_G$. 
\end{thm}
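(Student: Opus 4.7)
My plan is to translate the $T_1$ axiom into a statement about the subbasic closed sets $\mathcal{V}(\mathfrak{n})$ by means of the identity $\mathrm{Cl}(\{\mathfrak{n}\})=\mathcal{V}(\mathfrak{n})$, valid for every $\mathfrak{n}\in\sigma_G$. This identity is already built into the proof of Theorem~\ref{irrc}: there it is shown that $\mathcal{V}(\mathfrak{n})=\mathrm{Cl}(\mathfrak{n})$ whenever $\mathfrak{n}\in\mathcal{V}(\mathfrak{n})$, and the latter is automatic when $\mathfrak{n}\in\sigma_G$. Since $\sigma_G$ is already $T_0$ by Theorem~\ref{ct0t1}, being $T_1$ amounts to $\{\mathfrak{n}\}$ being closed for each $\mathfrak{n}\in\sigma_G$, which via the identity translates into $\mathcal{V}(\mathfrak{n})=\{\mathfrak{n}\}$; equivalently, no element of $\sigma_G$ strictly contains $\mathfrak{n}$.

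For sufficiency I would assume $\sigma_G\subseteq\mathrm{Max}_G$ and fix $\mathfrak{n}\in\sigma_G$. Any $\mathfrak{s}\in\mathcal{V}(\mathfrak{n})$ lies in $\sigma_G$ and is therefore a proper normal subgroup of $G$ containing the maximal normal subgroup $\mathfrak{n}$; maximality forces $\mathfrak{s}=\mathfrak{n}$. Hence $\mathcal{V}(\mathfrak{n})=\{\mathfrak{n}\}$ is a subbasic closed singleton, so $\sigma_G$ is $T_1$.

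For necessity I would assume $\sigma_G$ is $T_1$ and pick $\mathfrak{n}\in\sigma_G$. Aiming at a contradiction, suppose $\mathfrak{n}\notin\mathrm{Max}_G$. Since $\mathfrak{n}$ is proper (as $G\notin\sigma_G$) and $\mathrm{Max}_G$ exists, I can extract $\mathfrak{m}\in\mathrm{Max}_G$ with $\mathfrak{n}\subsetneq\mathfrak{m}$. Provided $\mathfrak{m}\in\sigma_G$, this yields $\mathfrak{m}\in\mathcal{V}(\mathfrak{n})=\{\mathfrak{n}\}$, contradicting $\mathfrak{n}\neq\mathfrak{m}$ and giving $\mathfrak{n}\in\mathrm{Max}_G$.

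The delicate step, and the main obstacle I anticipate, is the last one: justifying $\mathfrak{m}\in\sigma_G$. In the ring analogue this is handled invisibly by the fact ``maximal ideals are prime''; here the corresponding absorption property is not automatic and must be read into the hypothesis ``$\mathrm{Max}_G$ exists'' (in practice, that $\mathrm{Max}_G\subseteq\sigma_G$ and that every proper normal subgroup is absorbed by a maximal one) or inferred from the specific spectrum under consideration. Once this is granted, the two implications close off and deliver the stated equivalence.
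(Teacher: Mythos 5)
Your proof is correct and follows essentially the same route as the paper's: both directions reduce $T_1$ to the identity $\mathrm{Cl}(\{\mathfrak{n}\})=\mathcal{V}(\mathfrak{n})$ from Theorem~\ref{irrc} and then invoke maximality to collapse $\mathcal{V}(\mathfrak{n})$ to a singleton. The ``delicate step'' you flag in the necessity direction --- that one needs $\mathfrak{m}\in\sigma_G$ (and indeed that a maximal normal subgroup containing $\mathfrak{n}$ exists at all) before writing $\mathfrak{m}\in\mathcal{V}(\mathfrak{n})$ --- is a real issue, but it is present, silently and without justification, in the paper's own proof as well, so you have if anything been more careful than the source.
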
  
 
\begin{proof}  
Suppose that $\mathfrak{n}\in \sigma_G$. Then $\mathfrak{n}\in\mathcal{V}(\mathfrak{n})$, and so, by Theorem \ref{irrc}, $\mathrm{Cl}(\mathfrak{n})=\mathcal{V}(\mathfrak{n})$. Let $\mathfrak{m}$ be a maximal normal subgroup with $\mathfrak{n}\subseteq\mathfrak{m}$. Then   $\mathfrak{m}\in 	\mathcal{V}(\mathfrak{n})=\mathrm{Cl}(\mathfrak{n}) = \{\mathfrak{n}\}$, the last part because $\sigma_G$ is $T_{ 1}$. Therefore $\mathfrak{m}=\mathfrak{n}$, proving $\sigma_G\subseteq \mathrm{Max}(G)$.   
Conversely, in $\mathrm{Max}(G)$, $\mathcal{V}(\mathfrak{m})=\{\mathfrak{m}\}$ for every $\mathfrak{m}\in \mathrm{Max}(G)$, so that $\mathfrak{m}\in \mathcal{V}(\mathfrak{m})$, and hence, by Theorem \ref{irrc}, $\mathrm{Cl}(\mathfrak{m})=\{\mathfrak{m}\}$, showing that the structure space $\sigma_G$ is $T_{ 1}$.
\end{proof} 

\begin{cor}
 Let $G$ be a Noetherian group. If $\sigma_G$ is a discrete space then $G$ is Artinian.  
\end{cor}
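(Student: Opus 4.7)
The plan is to bootstrap from the preceding $T_1$-characterization. First, discreteness trivially implies $T_1$, so the theorem just proved forces $\sigma_G \subseteq \mathrm{Max}_G$: every element of $\sigma_G$ is already a maximal normal subgroup of $G$.

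Second, I would show that the Noetherian hypothesis on $G$ makes $\sigma_G$ a Noetherian topological space. Given a descending chain $K_1 \supseteq K_2 \supseteq \cdots$ of closed subsets of $\sigma_G$, the kernel assignment $K \mapsto \bigcap_{\mathfrak{n}\in K} \mathfrak{n}$ produces an ascending chain of normal subgroups of $G$, which stabilizes by ACC. Using the closure operator $\mathrm{Cl}$ introduced at the start of this section (together with Theorem~\ref{irrc} to pass back and forth between closed subsets and their kernels), one recovers that the original chain of closed sets itself stabilizes. Since a discrete space that satisfies DCC on closed subsets is necessarily finite, this step yields $\sigma_G = \{\mathfrak{m}_1, \ldots, \mathfrak{m}_n\}$, a finite set of maximal normal subgroups of $G$.

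Third, to pass from ``$\sigma_G$ is finite and contained in $\mathrm{Max}_G$'' to ``$G$ is Artinian'', I would apply the primary decomposition for Noetherian groups from \cite{K64} to the trivial subgroup $\mathfrak{e}$. This yields $\mathfrak{e} = \mathfrak{q}_1 \cap \cdots \cap \mathfrak{q}_k$ with each $\sqrt{\mathfrak{q}_i}$ a prime normal subgroup of $G$. The hypothesis that every prime in $\sigma_G$ is maximal forces each $\sqrt{\mathfrak{q}_i}$ to be maximal, so each quotient $G/\mathfrak{q}_i$ is a Noetherian ``dimension-zero'' group, hence Artinian by a group-theoretic Akizuki-type statement; the subdirect embedding $G \hookrightarrow \prod_i G/\mathfrak{q}_i$ then transfers DCC on normal subgroups back to $G$.

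The main obstacle is this last step: the group-theoretic Akizuki implication, ``Noetherian plus every prime normal subgroup maximal implies Artinian.'' For commutative rings the corresponding statement follows from nilpotence of the maximal ideal in a local Noetherian ring of Krull dimension zero, but for groups the analogue is less transparent: the commutator operation on $\mathrm{N}_G$ plays the role of the ring product, simple quotients need not be finite, and one must exploit the full strength of the primary decomposition machinery of \cite{K64} to show that a primary subgroup with maximal radical yields an Artinian quotient.
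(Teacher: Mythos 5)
The paper offers no proof of this corollary, so there is nothing to compare your argument against; it must stand on its own, and it does not. Your first two steps are acceptable in outcome: discreteness gives $T_1$, hence $\sigma_G\subseteq\mathrm{Max}_G$ by the preceding theorem, and finiteness of $\sigma_G$ follows (more cleanly than by your kernel-chain argument) from the later theorem that every normal structure space of a Noetherian group is compact, since a compact discrete space is finite. As written, though, your chain argument is not sound: in the coarse lower topology a closed set is an intersection of finite unions of sets $\mathcal{V}(\mathfrak{x})$ and is in general not recovered from its kernel by the operator $\mathrm{Cl}$, so stabilization of the kernels does not immediately force stabilization of the closed sets. The decisive problem is your third step. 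The implication ``Noetherian plus $\sigma_G$ a finite set of maximal normal subgroups implies Artinian'' \emph{is} the content of the corollary, and you do not prove it; you only gesture at Kurata's primary decomposition and then explicitly label the required Akizuki-type statement as ``the main obstacle.'' A proposal that ends by naming its own unproved main step is a reduction, not a proof. Note also that even granting an Artinian quotient $G/\mathfrak{q}_i$ for each primary component, the final transfer fails for groups: the minimal condition on normal subgroups does not pass from a direct product to a subgroup of it, because normal subgroups of a subgroup need not be traces of normal subgroups of the ambient group.

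Worse, the route cannot be repaired without changing the statement, because the corollary as literally stated is false. Take $G=\mathbb{Z}\times A_5$. Its normal subgroups are exactly $N\times 1$ and $N\times A_5$ with $N\leqslant\mathbb{Z}$, so $G$ satisfies the maximal condition on normal subgroups; every prime normal subgroup must contain the abelian normal subgroup $\mathbb{Z}\times 1$, and one checks that $\mathrm{Spec}_G=\{\mathbb{Z}\times 1\}$ is a nonempty one-point (hence discrete) space, while the chain $2\mathbb{Z}\times 1\supset 4\mathbb{Z}\times 1\supset\cdots$ shows $G$ is not Artinian. (Taking $G=\mathbb{Z}$ gives the even simpler degenerate case $\mathrm{Spec}_G=\emptyset$.) In these examples your step 3 collapses concretely: $\sqrt{\mathfrak{e}}$ is already $\mathbb{Z}\times 1$ (respectively $G$), the primary components of $\mathfrak{e}$ do not have the radicals your argument needs, and no finite family of maximal normal subgroups controls the descending chains inside $\mathbb{Z}\times 1$. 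Any honest proof must therefore first specify which spectrum $\sigma_G$ is intended and add a hypothesis (such as the partition of unity property, or that $\bigcap_{\mathfrak{s}\in\sigma_G}\mathfrak{s}$ is suitably small) before the Akizuki-type step can even be stated correctly.
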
 

Another consequence of Theorem \ref{irrc} is the following sufficient condition for a structure space to be connected. 
 
\begin{thm}\label{conis}
If a spectrum $\sigma_G$ contains $\mathfrak{e},$ then the  structure space $\sigma_G$ is connected.
\end{thm}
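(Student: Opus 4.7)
The plan is to deduce connectedness from the stronger property of irreducibility, exploiting the fact that $\mathfrak{e}$ sits below every normal subgroup of $G$.

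First I would observe that since $\mathfrak{e}$ is the trivial subgroup, we have $\mathfrak{e}\subseteq\mathfrak{n}$ for every $\mathfrak{n}\in\mathrm{N}_G,$ and in particular for every $\mathfrak{n}\in\sigma_G.$ This forces the subbasic closed set $\mathcal{V}(\mathfrak{e})$ to coincide with the whole space:
$$\mathcal{V}(\mathfrak{e})=\{\mathfrak{n}\in\sigma_G\mid \mathfrak{e}\subseteq\mathfrak{n}\}=\sigma_G.$$

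Next, since by hypothesis $\mathfrak{e}\in\sigma_G$ and trivially $\mathfrak{e}\subseteq\mathfrak{e},$ we have $\mathfrak{e}\in\mathcal{V}(\mathfrak{e}).$ Theorem \ref{irrc} then applies directly to the subbasic closed set $\mathcal{V}(\mathfrak{e})$ and yields that $\mathcal{V}(\mathfrak{e})$ is irreducible. Combining this with the previous step, the whole space $\sigma_G=\mathcal{V}(\mathfrak{e})$ is irreducible.

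Finally, I would invoke the standard topological fact that every irreducible space is connected: an irreducible space cannot be written as the union of two proper closed subsets, so in particular it cannot be written as the disjoint union of two nonempty closed subsets. Hence $\sigma_G$ is connected. There is essentially no obstacle here; the whole argument is a one-line application of Theorem \ref{irrc} once one notices that $\mathcal{V}(\mathfrak{e})$ exhausts $\sigma_G.$
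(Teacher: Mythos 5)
Your argument is correct and is essentially the paper's own proof: both note that $\mathcal{V}(\mathfrak{e})=\sigma_G$, use the hypothesis $\mathfrak{e}\in\sigma_G$ to apply Theorem \ref{irrc} and get irreducibility, and conclude connectedness from the fact that irreducible spaces are connected. Your write-up merely spells out the intermediate steps more explicitly.
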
  

\begin{proof}
We observe that  $\sigma_G=\mathcal{V}(\mathfrak{e})$ and irreducibility implies connectedness. Now the claim follows from Theorem \ref{irrc}. 
\end{proof}
 
\begin{exam}
\emph{The normal structure spaces $\mathrm{Prp}_G$, $\mathrm{Fgen}_G$, $\mathrm{Prin}_G$ are connected. If $G$ is a group such that the lattice $\mathrm{N}_G$ is uniform, then by Proposition \ref{afc} and Theorem \ref{irrc}, the normal structure space $\mathrm{Spec}_G$ is connected.}
\end{exam}  

The next definition is going to play an important role in proving existence of generic points of nonempty irreducible closed sets of a normal structure space.  

\begin{defn} \label{grad} 
\emph{For a spectrum $\sigma_G$ and for $\mathfrak{x}\in\mathrm{N}_G,$ we define the normal subgroup $$\sqrt{\mathfrak{x}}^{\omega}=\bigcap_{\substack{\mathfrak{x}\subseteq \mathfrak{n} \\ \mathfrak{n}\in \sigma_G}}  \mathcal{V}(\mathfrak{x}).$$}
\end{defn}  

\begin{lem}\label{hrx} 
Let $\sigma_G$ is
a spectrum of $G$. Then
\begin{enumerate}
\label{rxlr}  
\item\label{axaa} for every $\mathfrak{x}\in \mathrm{N}_G,$ $\mathfrak{x}\subseteq \sqrt{\mathfrak x}^{\omega}.$ 

\item \label{axa}   
If $\mathfrak{n}\in\sigma_G,$ then $\mathfrak{n}=\sqrt{\mathfrak n}^{\omega}$.

\item \label{hahxa} For all $\mathfrak{x} \in \mathrm{N}_G,$ $\mathcal{V}(\mathfrak{x})=\mathcal{V}(\sqrt{\mathfrak x}^{\omega}).$

\item\label{xbxa} For every $\mathfrak{x}, \mathfrak{y} \in \mathrm{N}_G,$ the inclusion $\mathcal{V}(\mathfrak{x})\subseteq \mathcal{V}(\mathfrak{y})$ holds if and only if  $\sqrt{\mathfrak y}^{\omega}\subseteq\sqrt{\mathfrak x}^{\omega}$.
\end{enumerate}
\end{lem}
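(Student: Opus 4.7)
The plan is to read Definition \ref{grad} as saying that $\sqrt{\mathfrak{x}}^{\omega}$ is the intersection of all $\mathfrak{n}\in\sigma_G$ with $\mathfrak{x}\subseteq\mathfrak{n}$ (so that the object is a normal subgroup of $G$, parallel to the radical introduced in Section~2), and then verify the four items by a straightforward unpacking of this formula. For (\ref{axaa}), every $\mathfrak{n}$ indexing the intersection satisfies $\mathfrak{x}\subseteq\mathfrak{n}$ by construction, so $\mathfrak{x}$ is contained in the intersection. For (\ref{axa}), the inclusion $\mathfrak{n}\subseteq\sqrt{\mathfrak{n}}^{\omega}$ comes from (\ref{axaa}), while the reverse inclusion holds because $\mathfrak{n}$ itself appears in the family over which we intersect (it lies in $\sigma_G$ and contains itself), forcing $\sqrt{\mathfrak{n}}^{\omega}\subseteq\mathfrak{n}$.

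For (\ref{hahxa}), the inclusion $\mathcal{V}(\sqrt{\mathfrak{x}}^{\omega})\subseteq\mathcal{V}(\mathfrak{x})$ is immediate from (\ref{axaa}) together with the obvious monotonicity $\mathfrak{a}\subseteq\mathfrak{b}\Rightarrow\mathcal{V}(\mathfrak{b})\subseteq\mathcal{V}(\mathfrak{a})$. For the reverse, if $\mathfrak{s}\in\mathcal{V}(\mathfrak{x})$ then $\mathfrak{s}$ itself is one of the subgroups indexing the intersection that defines $\sqrt{\mathfrak{x}}^{\omega}$, so $\sqrt{\mathfrak{x}}^{\omega}\subseteq\mathfrak{s}$, i.e.\ $\mathfrak{s}\in\mathcal{V}(\sqrt{\mathfrak{x}}^{\omega})$. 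Item (\ref{xbxa}) then drops out by combining (\ref{hahxa}) with the same monotonicity: if $\sqrt{\mathfrak{y}}^{\omega}\subseteq\sqrt{\mathfrak{x}}^{\omega}$ then $\mathcal{V}(\sqrt{\mathfrak{x}}^{\omega})\subseteq\mathcal{V}(\sqrt{\mathfrak{y}}^{\omega})$, which via (\ref{hahxa}) rewrites as $\mathcal{V}(\mathfrak{x})\subseteq\mathcal{V}(\mathfrak{y})$; conversely, $\mathcal{V}(\mathfrak{x})\subseteq\mathcal{V}(\mathfrak{y})$ says that the index family for $\sqrt{\mathfrak{x}}^{\omega}$ sits inside the index family for $\sqrt{\mathfrak{y}}^{\omega}$, and intersecting over a smaller family yields a larger subgroup, giving $\sqrt{\mathfrak{y}}^{\omega}\subseteq\sqrt{\mathfrak{x}}^{\omega}$.

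I do not anticipate any serious obstacle: the whole lemma is a definitional exercise mirroring the familiar properties of the radical of an ideal. The only point worth watching is the contravariant behaviour between an index set and the intersection taken over it, which drives the reversal of inclusions in (\ref{xbxa}); every other step is a direct consequence of the definitions of $\mathcal{V}(-)$ and $\sqrt{-}^{\omega}$.
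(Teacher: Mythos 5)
Your proposal is correct and follows essentially the same route as the paper: both read $\sqrt{\mathfrak{x}}^{\omega}$ as the intersection of all members of $\sigma_G$ containing $\mathfrak{x}$ and verify each item by directly unpacking that definition together with the antitonicity of $\mathcal{V}(-)$. If anything, your write-up is more explicit than the paper's (which leaves (\ref{axaa}) and half of (\ref{hahxa}) and (\ref{xbxa}) as ``follows from the definition'').
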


\begin{proof}
The inclusion in (i) follows from Definition \ref{grad}. 
When $\mathfrak{n}\in\sigma_G,$ then we have $$\sqrt{\mathfrak n}^{\omega}=\bigcap_{\mathfrak{n}\subseteq \mathfrak{s}}\left\{\mathfrak{s}\in\sigma_G\right\}=\mathfrak{n},$$ and that proves (ii).  
For (iii), by (\ref{axaa}), $\mathfrak{x}\subseteq \sqrt{\mathfrak x}^{\omega},$ and hence we have $\mathcal{V}(\mathfrak{x})\supseteq \mathcal{V}(\sqrt{\mathfrak x}^{\omega}).$ The other inclusion follows from Definition \ref{grad}. 
The necessity part of (iv) is obvious. Now suppose $\sqrt[ \mathrm{X}]{\mathfrak{y}}\subseteq \sqrt{\mathfrak x}^{\omega}$, and  $\mathfrak{s}\in \mathcal{V}(\mathfrak{x})$. Then $\mathfrak{s}\in \sigma_G$ and $\mathfrak{x}\subseteq \mathfrak{s}$, so that $\sqrt{\mathfrak x}^{\omega}\subseteq \mathfrak{s}$. Therefore, 
$$
\mathfrak{s}\in \mathcal{V}(\mathfrak{s})\subseteq \mathcal{V}(\sqrt{\mathfrak x}^{\omega})\subseteq \mathcal{V}(\sqrt[\mathrm{X}]{\mathfrak{y}})\subseteq \mathcal{V}(\mathfrak{y}),
$$
showing that $\mathcal{V}(\mathfrak{x})\subseteq \mathcal{V}(\mathfrak{y})$.
\end{proof}

\begin{thm}\label{sob}  
Every nonempty irreducible closed subset $\mathcal{V}(\mathfrak{n})$ of a normal structure space $\sigma_G$ has a unique generic point if and only if  $\mathcal{V}(\mathfrak{n})$  contains $\sqrt{\mathfrak n}^{\omega}.$ 
\end{thm}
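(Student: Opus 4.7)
The plan is to reduce the statement to computing closures of singletons in $\sigma_G$ and then to use Lemma \ref{hrx} to characterize when two subbasic closed sets coincide. The key observation is that for any $\mathfrak{m}\in\sigma_G$ one has $\mathfrak{m}\in\mathcal{V}(\mathfrak{m})$ (since $\mathfrak{m}\subseteq\mathfrak{m}$), so Theorem \ref{irrc} gives $\mathrm{Cl}(\{\mathfrak{m}\})=\mathcal{V}(\mathfrak{m})$. Therefore $\mathfrak{m}$ is a generic point of $\mathcal{V}(\mathfrak{n})$ precisely when $\mathcal{V}(\mathfrak{m})=\mathcal{V}(\mathfrak{n})$; by Lemma \ref{hrx}(\ref{xbxa}) applied in both directions this is equivalent to $\sqrt{\mathfrak{m}}^{\omega}=\sqrt{\mathfrak{n}}^{\omega}$, and by Lemma \ref{hrx}(\ref{axa}) the left-hand side simplifies to $\mathfrak{m}$. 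Crucially then, $\mathfrak{m}$ is a generic point of $\mathcal{V}(\mathfrak{n})$ if and only if $\mathfrak{m}=\sqrt{\mathfrak{n}}^{\omega}$. This single characterization delivers uniqueness at no extra cost and reduces the whole statement to the question of existence.

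With this dictionary in place, the two directions of the equivalence become short. For the ``if'' direction, assume $\sqrt{\mathfrak{n}}^{\omega}\in\mathcal{V}(\mathfrak{n})$; this unpacks to $\sqrt{\mathfrak{n}}^{\omega}\in\sigma_G$ together with $\mathfrak{n}\subseteq\sqrt{\mathfrak{n}}^{\omega}$, the latter being automatic from Lemma \ref{hrx}(\ref{axaa}). Then $\sqrt{\mathfrak{n}}^{\omega}$ itself is a point of $\sigma_G$ satisfying the characterization above, so it is a generic point, and uniqueness follows from the same characterization. For the ``only if'' direction, if a unique generic point $\mathfrak{m}$ of $\mathcal{V}(\mathfrak{n})$ exists, then by the characterization $\mathfrak{m}=\sqrt{\mathfrak{n}}^{\omega}$; since $\mathfrak{m}$ lies in its own closure $\mathcal{V}(\mathfrak{n})$, we obtain $\sqrt{\mathfrak{n}}^{\omega}\in\mathcal{V}(\mathfrak{n})$ as required.

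There is no real obstacle here: the argument is essentially bookkeeping once Theorem \ref{irrc} (which replaces $\mathrm{Cl}(\{\mathfrak{m}\})$ by $\mathcal{V}(\mathfrak{m})$) and Lemma \ref{hrx} (which relates inclusions of subbasic closed sets to inclusions of the ``radicals'' $\sqrt{\,\cdot\,}^{\omega}$) are in hand. The only subtlety worth flagging is that the hypothesis ``$\mathcal{V}(\mathfrak{n})$ contains $\sqrt{\mathfrak{n}}^{\omega}$'' silently encodes two conditions, namely that $\sqrt{\mathfrak{n}}^{\omega}$ lies in the spectrum $\sigma_G$ and that it contains $\mathfrak{n}$; since the latter is automatic, the genuine content of the assumption is membership of $\sqrt{\mathfrak{n}}^{\omega}$ in $\sigma_G$, which is precisely what is needed to exhibit a generic point.
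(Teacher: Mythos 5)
Your proof is correct, and the forward direction is in substance the paper's own (produce a generic point $\mathfrak{n}'$, note $\mathrm{Cl}(\mathfrak{n}')=\mathcal{V}(\mathfrak{n}')=\mathcal{V}(\mathfrak{n})$, and apply Lemma \ref{hrx}(\ref{axa}) to conclude $\mathfrak{n}'=\sqrt{\mathfrak n}^{\omega}$). Where you genuinely diverge is in organizing everything around the single pointwise dictionary ``$\mathfrak{m}$ is a generic point of $\mathcal{V}(\mathfrak{n})$ iff $\mathfrak{m}=\sqrt{\mathfrak n}^{\omega}$,'' obtained from Theorem \ref{irrc} together with Lemma \ref{hrx}(\ref{axa}) and (\ref{xbxa}); this makes the equivalence set-by-set rather than globally quantified, and it yields uniqueness for free (the paper leaves uniqueness essentially implicit, where it would otherwise be deduced from the $T_0$ property of Theorem \ref{ct0t1}). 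The paper's converse takes a different and more ambitious route: it starts from an arbitrary irreducible closed set $K$, writes it as an intersection of finite unions of subbasic sets, uses irreducibility to replace each finite union by a single $\mathcal{V}(\mathfrak{n}_{\lambda})$, and collapses the intersection to $\mathcal{V}(\mathfrak{t})=\mathcal{V}(\sqrt{\mathfrak t}^{\omega})$ with $\mathfrak{t}=\sum_{\lambda}\mathfrak{n}_{\lambda}$. That argument proves more than the statement asks --- under the hypothesis, \emph{every} nonempty irreducible closed set has a generic point, i.e.\ the space is sober, which is what one wants for the spectral-space question raised at the end of the paper --- at the cost of the bookkeeping with subbases. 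Your version proves exactly the stated equivalence for the subbasic closed sets $\mathcal{V}(\mathfrak{n})$, more economically; if you wanted the sobriety consequence you would still need the paper's reduction of a general irreducible closed set to the form $\mathcal{V}(\mathfrak{t})$.
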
  

\begin{proof}
Suppose that every nonempty irreducible closed subset of $\sigma_G$ has a unique generic point and $\mathcal{V}(\mathfrak{n})$ is one such. Then there exists $\mathfrak{n}'\in \sigma_G$ such that 
$\mathcal{V}(\mathfrak{n})=\mathrm{Cl}(\mathfrak{n}')=\mathcal{V}(\mathfrak{n}').$ Then by Lemma \ref{rxlr}(\ref{axa}) we have $\mathfrak{n}'=\sqrt{\mathfrak n}^{\omega}\in \sigma_G$.     
Conversely, let every non-empty irreducible subbasic  closed set $\mathcal{V}(\mathfrak{n})$ contains $\sqrt{\mathfrak n}^{\omega}.$ and let $K$ be an irreducible closed subset of $\sigma_G$. By definition, $$K=\bigcap_{\lambda\in \Omega}E_{\lambda},$$ where $E_{\lambda}$ is a finite union of sets of the type $\mathcal{V}(\mathfrak{n})$, for suitable normal subgroups $\mathfrak n$ of $G$. Since $K$ is irreducible, for every ${\lambda}\in \Omega$ there exists an normal subgroup $\mathfrak n_{\lambda}$ of $G$ such that $K\subseteq \mathcal{V}(\mathfrak{n}_{\lambda})\subseteq E_{\lambda}$ and thus, if $\mathfrak t= \sum_{{\lambda}\in \Omega}\mathfrak n_{\lambda}$, then we have  $$K=\bigcap_{{\lambda}\in \Omega}\mathcal{V}(\mathfrak n_{\lambda})=\mathcal{V}(\mathfrak t)=\mathcal{V}(\sqrt{\mathfrak t}^{\omega}).$$ By assumption, $\sqrt{\mathfrak t}^{\omega}\in \sigma_G$ and thus $K=\mathrm{Cl}(\sqrt{\mathfrak t}^{\omega})$. 
\end{proof}
  
\begin{prop}\label{sirrs} 
If $G$ has maximal normal subgroups, then every nonempty irreducible closed subset of $\mathrm{Irr^+}_G$ has a unique generic point.
\end{prop}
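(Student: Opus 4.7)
The strategy is to verify the criterion provided by Theorem \ref{sob}, namely to show that for every $\mathfrak{t}\in \mathrm{N}_G$ with $\mathcal{V}(\mathfrak{t})$ a nonempty irreducible subbasic closed subset of $\mathrm{Irr}^+_G$, the normal subgroup $\sqrt{\mathfrak{t}}^{\omega}$ itself lies in $\mathrm{Irr}^+_G$. The reduction of an arbitrary irreducible closed set $K\subseteq \mathrm{Irr}^+_G$ to the subbasic form $\mathcal{V}(\mathfrak{t})$ is already carried out in the proof of Theorem \ref{sob} via $\mathfrak{t}=\sum_{\lambda}\mathfrak{n}_\lambda$, so I do not need to redo it. The standing assumption that $G$ has maximal normal subgroups ensures $\mathrm{Irr}^+_G\neq \emptyset$ (any maximal normal subgroup is trivially strongly irreducible), and together with $\mathcal{V}(\mathfrak{t})\neq \emptyset$ it guarantees that $\sqrt{\mathfrak{t}}^{\omega}$ is contained in some element of $\mathrm{Irr}^+_G$, hence is proper.

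The main work is to verify strong irreducibility of $\sqrt{\mathfrak{t}}^{\omega}$. I would argue by contradiction: suppose there exist $\mathfrak{a},\mathfrak{b}\in \mathrm{N}_G$ with $\mathfrak{a}\cap \mathfrak{b}\subseteq \sqrt{\mathfrak{t}}^{\omega}$ but $\mathfrak{a}\not\subseteq \sqrt{\mathfrak{t}}^{\omega}$ and $\mathfrak{b}\not\subseteq \sqrt{\mathfrak{t}}^{\omega}$. Unwinding Definition \ref{grad}, the failure of each of these last two containments produces witnesses $\mathfrak{s}_a,\mathfrak{s}_b\in \mathcal{V}(\mathfrak{t})$ in $\mathrm{Irr}^+_G$ with $\mathfrak{a}\not\subseteq \mathfrak{s}_a$ and $\mathfrak{b}\not\subseteq \mathfrak{s}_b$.

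The crux is the following decomposition, which I claim holds under our assumptions:
\[
\mathcal{V}(\mathfrak{t}) \;=\; \mathcal{V}(\mathfrak{t}+\mathfrak{a})\,\cup\, \mathcal{V}(\mathfrak{t}+\mathfrak{b}).
\]
The inclusion $\supseteq$ is immediate. For $\subseteq$, take any $\mathfrak{s}\in \mathcal{V}(\mathfrak{t})$; then $\mathfrak{s}\in \mathrm{Irr}^+_G$ and $\sqrt{\mathfrak{t}}^{\omega}\subseteq \mathfrak{s}$, so by hypothesis $\mathfrak{a}\cap \mathfrak{b}\subseteq \mathfrak{s}$. Strong irreducibility of $\mathfrak{s}$ (in the form of condition \eqref{mip}) then forces $\mathfrak{a}\subseteq \mathfrak{s}$ or $\mathfrak{b}\subseteq \mathfrak{s}$, placing $\mathfrak{s}$ in the appropriate piece. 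Since $\mathfrak{s}_a\in \mathcal{V}(\mathfrak{t})\setminus \mathcal{V}(\mathfrak{t}+\mathfrak{a})$ and $\mathfrak{s}_b\in \mathcal{V}(\mathfrak{t})\setminus \mathcal{V}(\mathfrak{t}+\mathfrak{b})$, both pieces are proper closed subsets of $\mathcal{V}(\mathfrak{t})$, contradicting its irreducibility. Thus $\sqrt{\mathfrak{t}}^{\omega}$ is strongly irreducible, and hence in $\mathrm{Irr}^+_G$; Theorem \ref{sob} then delivers the unique generic point.

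The main obstacle is the $\subseteq$ direction of the decomposition display, which is precisely the step that exploits strong irreducibility of the individual points of $\mathrm{Irr}^+_G$ and the assumed containment $\mathfrak{a}\cap \mathfrak{b}\subseteq \sqrt{\mathfrak{t}}^{\omega}$; the remainder of the argument is essentially bookkeeping.
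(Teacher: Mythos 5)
Your argument is correct, but it takes a genuinely different route from the paper. The paper disposes of the proposition in one line: it observes that $(\mathrm{N}_G,\cap,\cap)$ is a multiplicative lattice in which the strongly irreducible normal subgroups are precisely the prime elements, and then invokes the abstract sobriety result \cite[Lemma 2.6]{FFJ22} for prime spectra of such lattices; the hypothesis that $G$ has maximal normal subgroups is there to feed that cited lemma. You instead verify the internal criterion of Theorem \ref{sob} directly: for a nonempty irreducible $\mathcal{V}(\mathfrak{t})$ you show that the kernel $\sqrt{\mathfrak{t}}^{\omega}$ is itself strongly irreducible, via the decomposition $\mathcal{V}(\mathfrak{t})=\mathcal{V}(\mathfrak{t}+\mathfrak{a})\cup\mathcal{V}(\mathfrak{t}+\mathfrak{b})$, whose nontrivial inclusion correctly exploits condition \eqref{mip} at each point $\mathfrak{s}\in\mathcal{V}(\mathfrak{t})$ together with $\mathfrak{a}\cap\mathfrak{b}\subseteq\sqrt{\mathfrak{t}}^{\omega}\subseteq\mathfrak{s}$. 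What this buys is a self-contained proof that in fact never uses the maximality hypothesis in any essential way, so your argument establishes the slightly stronger statement that every nonempty irreducible closed subset of $\mathrm{Irr}^+_G$ has a unique generic point for an arbitrary group $G$.

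One parenthetical claim in your first paragraph is false and should be deleted: a maximal normal subgroup need not be strongly irreducible. In the Klein four-group the three subgroups of order $2$ are all maximal normal, yet for any one of them, say $\mathfrak{m}$, the other two intersect in $\mathfrak{e}\subseteq\mathfrak{m}$ while neither is contained in $\mathfrak{m}$; so $\mathfrak{m}\notin\mathrm{Irr}^+_G$. Fortunately nothing in your proof depends on this remark: the properness of $\sqrt{\mathfrak{t}}^{\omega}$ already follows from $\mathcal{V}(\mathfrak{t})\neq\emptyset$ alone, since $\sqrt{\mathfrak{t}}^{\omega}$ is contained in any element of $\mathcal{V}(\mathfrak{t})$ and every element of a spectrum is a proper normal subgroup by the standing convention $G\notin\sigma_G$. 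With that parenthesis removed, the proof stands.
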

   
\begin{proof} 
Since $\mathrm{N}_G$ is a multiplicative lattice with product as  intersection, every strongly irreducible normal subgroup becomes a prime element of this lattice and hence by \cite[Lemma 2.6]{FFJ22}, every nonempty irreducible closed subset of $\mathrm{Irr^+}_G$ has a unique generic point.
\end{proof}

\begin{rem} 
\emph{Since every non-empty irreducible closed subset $\mathcal{V}(\mathfrak{n})$ of $\mathrm{Prp}_G$ contains $\mathfrak{n},$ every such set has a unique generic point. }
\end{rem}

For a commutative ring $R$, it is well known that spectrum of prime ideals $\mathrm{Spec}(R)$ is compact with respect to Zariski topology. The same is also true for any class of ideals that contains all the maximal ideals of $R$. For groups the situation is more subtle because of possible nonexistence of maximal normal subgroups in a group. However, having the following property guarantees compactness of certain normal structure spaces.

\begin{defn}\label{fulle}
\emph{We say a normal structure space has the \emph{partition of unity} property if for every $\mathfrak{n}\in \mathrm{N}_G,$ $\mathcal{V}(\mathfrak{n})=\emptyset$ implies $\mathfrak{n}=G.$}
\end{defn}  

\begin{prop}
If a structure space $\sigma_G$ have the partition of unity property then $\sigma_G$ contains all maximal normal subgroups of $G$ whenever they exist.
\end{prop}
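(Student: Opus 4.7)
The plan is to exploit the partition of unity property in its contrapositive form: if $\mathfrak{x}$ is a proper normal subgroup of $G$, then $\mathcal{V}(\mathfrak{x})$ must contain at least one element of $\sigma_G$. Applying this to a maximal normal subgroup will then force that subgroup itself to lie in $\sigma_G$, by a maximality pinching argument.

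Concretely, let $\mathfrak{m}$ be a maximal normal subgroup of $G$. Since $\mathfrak{m}$ is proper, $\mathfrak{m} \neq G$, so the partition of unity property yields $\mathcal{V}(\mathfrak{m}) \neq \emptyset$. Pick any $\mathfrak{n} \in \mathcal{V}(\mathfrak{m})$; by definition of $\mathcal{V}(\mathfrak{m})$, this element satisfies $\mathfrak{n} \in \sigma_G$ and $\mathfrak{m} \subseteq \mathfrak{n}$. Recall from the paper's global convention that $G \notin \sigma_G$ for any spectrum, so $\mathfrak{n}$ is necessarily a proper normal subgroup of $G$.

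Now the maximality of $\mathfrak{m}$ applies: the chain $\mathfrak{m} \subseteq \mathfrak{n} \subsetneq G$ of normal subgroups of $G$ forces $\mathfrak{n} = \mathfrak{m}$, since otherwise $\mathfrak{n}$ would be a proper normal subgroup of $G$ properly containing $\mathfrak{m}$, contradicting maximality. Hence $\mathfrak{m} = \mathfrak{n} \in \sigma_G$, and as $\mathfrak{m} \in \mathrm{Max}_G$ was arbitrary, we conclude $\mathrm{Max}_G \subseteq \sigma_G$ whenever $\mathrm{Max}_G$ is nonempty.

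There is really no serious obstacle here: the only subtle point is remembering to invoke the standing assumption $G \notin \sigma_G$ to ensure the element $\mathfrak{n}$ produced by the partition of unity property is proper, which is precisely what allows the maximality of $\mathfrak{m}$ to be activated. Everything else is a direct unwinding of Definition \ref{fulle} and of what it means for $\mathfrak{m}$ to be a maximal normal subgroup.
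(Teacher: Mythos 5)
Your proof is correct and is essentially the paper's own argument run in the contrapositive direction: the paper assumes $\mathfrak{m}\notin\sigma_G$, deduces $\mathcal{V}(\mathfrak{m})=\emptyset$ (using maximality and the convention $G\notin\sigma_G$), and contradicts the partition of unity property. Your version simply makes explicit the pinching step $\mathfrak{m}\subseteq\mathfrak{n}\subsetneq G$ that the paper leaves implicit.
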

	
\begin{proof}
Suppose $\sigma_G$ has the partition of unity property. Let $\mathfrak{m}$ be a maximal normal subgroup of $G$. If $\mathfrak{m}$ were not in $\sigma_G$, then we would have $\mathcal{V}(\mathfrak{m}) = \emptyset$, and since $\mathfrak{m} \neq G$, we would have a contradiction.
\end{proof} 
  
\begin{rem}
\emph{For rings the converse also holds, whereas in case of groups, maximal normal subgroups may not exist (see Remark \ref{nmns}), and hence application's of Zorn's lemma fails.}
\end{rem}

\begin{thm}\label{comp} 
If a normal structure space $\sigma_G$ has the partition of unity property, then $\sigma_G$ is compact. 
\end{thm}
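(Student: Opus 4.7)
The plan is to apply Alexander's subbase theorem, which reduces compactness to showing that every cover of $\sigma_G$ by subbasic open sets admits a finite subcover. Since the subbasic closed sets are the $\mathcal{V}(\mathfrak{x})$ with $\mathfrak{x}\in \mathrm{N}_G$, a subbasic open cover is given by a family $\{\sigma_G\setminus\mathcal{V}(\mathfrak{x}_\lambda)\}_{\lambda\in\Omega}$; the covering condition dualises to
\[
\bigcap_{\lambda\in\Omega}\mathcal{V}(\mathfrak{x}_\lambda)=\emptyset.
\]

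The first key step is to identify arbitrary intersections of subbasic closed sets with a single $\mathcal{V}$. By definition of the join $\sum_{\lambda}\mathfrak{x}_{\lambda}$, a normal subgroup $\mathfrak{n}$ contains every $\mathfrak{x}_\lambda$ if and only if it contains $\sum_{\lambda}\mathfrak{x}_\lambda$, so
\[
\bigcap_{\lambda\in\Omega}\mathcal{V}(\mathfrak{x}_\lambda)=\mathcal{V}\Bigl(\sum_{\lambda\in\Omega}\mathfrak{x}_\lambda\Bigr).
\]
Combined with the covering hypothesis this gives $\mathcal{V}(\sum_{\lambda}\mathfrak{x}_\lambda)=\emptyset$, and the partition of unity property then forces $\sum_{\lambda}\mathfrak{x}_\lambda=G$.

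The second step is the extraction of a finite subcover. From $\sum_{\lambda\in\Omega}\mathfrak{x}_\lambda=G$ one wants to produce a finite $F\subseteq\Omega$ with $\sum_{\lambda\in F}\mathfrak{x}_\lambda=G$; then (again by the assumption $G\notin\sigma_G$, equivalently by the partition of unity property) $\mathcal{V}(\sum_{\lambda\in F}\mathfrak{x}_\lambda)=\emptyset$, which by the same intersection identity as above means $\bigcap_{\lambda\in F}\mathcal{V}(\mathfrak{x}_\lambda)=\emptyset$, i.e.\ $\{\sigma_G\setminus\mathcal{V}(\mathfrak{x}_\lambda)\}_{\lambda\in F}$ is a finite subcover.

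The hard part is precisely this last passage from an infinite join to a finite one. For rings the analogous argument is free: writing $1$ as a finite sum of elements of the $I_\lambda$'s gives $1\in \sum_{\lambda\in F}I_\lambda=R$ for some finite $F$. For groups there is no single ``unit'' to track, and the most natural route is to pick a (finite) generating set of $G$ and note that each generator is expressible as a finite product of elements drawn from the union $\bigcup_\lambda\mathfrak{x}_\lambda$; the finitely many indices appearing then form the desired $F$. This makes clear where the subtlety lies and explains why, in contrast with the ring case, the group-theoretic version leans on a finiteness feature of $G$ (or of how $G$ is expressed as $\sum\mathfrak{x}_\lambda$) in addition to the partition of unity property itself.
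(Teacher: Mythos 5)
Your overall strategy is exactly the paper's: invoke Alexander's subbase theorem, rewrite an arbitrary intersection of subbasic closed sets as $\mathcal{V}\bigl(\sum_{\lambda}\mathfrak{x}_{\lambda}\bigr)$, use the partition of unity property to conclude $\sum_{\lambda\in\Omega}\mathfrak{x}_{\lambda}=G$, and then try to descend to a finite subfamily. Everything up to that last descent is correct and matches the paper's argument step for step.

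The descent itself is a genuine gap, and you are right to single it out as the hard part. Your proposed route --- choose a finite generating set of $G$ and express each generator as a finite product of elements of $\bigcup_{\lambda}\mathfrak{x}_{\lambda}$ --- only works when $G$ is finitely generated, which is not among the hypotheses. Without that assumption the implication ``$\sum_{\lambda\in\Omega}\mathfrak{x}_{\lambda}=G$ forces $\sum_{\lambda\in F}\mathfrak{x}_{\lambda}=G$ for some finite $F$'' simply fails: take $G=\bigoplus_{i\ge 1}\mathbb{Z}/2\mathbb{Z}$ and let $\mathfrak{x}_{i}$ be the subgroup supported on the first $i$ coordinates; the full join is $G$ but every finite subjoin is proper, and with $\sigma_G=\mathrm{Prp}_G$ (which does have the partition of unity property) this produces a subbasic open cover with no finite subcover. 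So the gap cannot be closed without an extra finiteness hypothesis. For what it is worth, the paper's own proof does not close it either: it writes $e=\sum_{\lambda_i}a_{\lambda_i}$ with $a_{\lambda_i}\in\mathfrak{n}_{\lambda_i}$ and concludes $G=\sum_{i=1}^{n}\mathfrak{n}_{\lambda_i}$, a verbatim transplant of the ring argument with $1$ replaced by $e$; but $e$ lies in every subgroup, so such an expression carries no information and the conclusion does not follow. Your diagnosis of where the ring analogy breaks is therefore the correct one; the statement really needs something like $G$ finitely generated (compare the subsequent theorem in the paper for Noetherian $G$, where the finite-subfamily extraction is legitimate because the join itself is finitely generated).
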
 
  
\begin{proof}    
Let  $\{K_{ \lambda}\}_{\lambda \in \Omega}$ be a family of subbasic closed sets of a normal structure space $\sigma_G$   such that $\bigcap_{\lambda\in \Omega}K_{ \lambda}=\emptyset.$ Let $\{\mathfrak{n}_{ \lambda}\}_{\lambda \in \Omega}$ be a family of normal subgroups of $\mathrm{N}_G$ such  that $\forall \lambda \in \Omega,$  $K_{ \lambda}=\mathcal{V}(\mathfrak{n}_{ \lambda}).$  Since $\bigcap_{\lambda \in \Omega}\mathcal{V}(\mathfrak{n}_{ \lambda})=\mathcal{V}\left(\sum_{\lambda \in \Omega}\mathfrak{n}_{ \lambda}\right),$ we get  $\mathcal{V}\left(\sum_{\lambda \in \Omega}\mathfrak{n}_{ \lambda}\right)=\emptyset,$ and that by Definition  \ref{fulle} gives $ \sum_{\lambda \in \Omega}\mathfrak{n}_{ \lambda}=G.$ Then, in particular, we obtain $e=\sum_{\lambda_i\in \Omega}a_{ \lambda_i},$ where $a_{ \lambda_i}\in \mathfrak{n}_{\lambda_i}$ and $a_{ \lambda_i}\neq 0$ for $i=1, \ldots, n$. This implies    $G=\sum_{  i \, =1}^{ n}\mathfrak{n}_{\lambda_i}.$ Hence,   $\bigcap_{ i\,=1}^{ n}K_{ \lambda_i}=\emptyset,$ and $\sigma_G$ is compact by Alexander subbasis Theorem.    
\end{proof}  

If $G$ is a Noetherian group, then we have the following result which may considered as a group-theoretic version of the corresponding result proved in \cite{FGS22} for rings. 
 
\begin{thm}
If $G$ is a Noetherian group then every normal structure space $\sigma_G$ is compact.
\end{thm}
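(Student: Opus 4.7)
The plan is to invoke the Alexander subbasis theorem in exactly the same way as in the proof of Theorem~\ref{comp}, but to replace the appeal to the partition of unity property by an application of the ascending chain condition on normal subgroups. Let $\{\mathcal{V}(\mathfrak{n}_\lambda)\}_{\lambda\in\Omega}$ be a family of subbasic closed sets of $\sigma_G$ with empty intersection. As already observed in the proof of Theorem~\ref{comp}, the identity
\[
\bigcap_{\lambda\in\Omega}\mathcal{V}(\mathfrak{n}_\lambda)=\mathcal{V}\!\left(\sum_{\lambda\in\Omega}\mathfrak{n}_\lambda\right)
\]
holds because $\sum_{\lambda\in\Omega}\mathfrak{n}_\lambda$ is the least normal subgroup of $G$ containing every $\mathfrak{n}_\lambda$. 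Hence $\mathcal{V}(\mathfrak{n})=\emptyset$, where $\mathfrak{n}=\sum_{\lambda\in\Omega}\mathfrak{n}_\lambda$.

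The key step is then to show that some finite subsum already equals $\mathfrak{n}$. I would consider the collection
\[
\mathcal{F}=\bigl\{\mathfrak{n}_{\lambda_1}+\cdots+\mathfrak{n}_{\lambda_k}\mid k\ge 1,\ \lambda_1,\dots,\lambda_k\in\Omega\bigr\}\subseteq\mathrm{N}_G,
\]
ordered by inclusion. Since $G$ is Noetherian, every nonempty subfamily of $\mathrm{N}_G$ admits a maximal element; let $\mathfrak{t}=\mathfrak{n}_{\lambda_1}+\cdots+\mathfrak{n}_{\lambda_n}\in\mathcal{F}$ be one such. I claim $\mathfrak{t}=\mathfrak{n}$: otherwise there would exist some $\mu\in\Omega$ with $\mathfrak{n}_\mu\nsubseteq\mathfrak{t}$, and then $\mathfrak{t}+\mathfrak{n}_\mu\in\mathcal{F}$ would strictly contain $\mathfrak{t}$, contradicting maximality.

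Granting this, I conclude
\[
\bigcap_{i=1}^{n}\mathcal{V}(\mathfrak{n}_{\lambda_i})=\mathcal{V}\!\left(\sum_{i=1}^{n}\mathfrak{n}_{\lambda_i}\right)=\mathcal{V}(\mathfrak{t})=\mathcal{V}(\mathfrak{n})=\emptyset,
\]
so that the original family of subbasic closed sets has a finite subfamily with empty intersection. The Alexander subbasis theorem then delivers compactness of $\sigma_G$.

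The only substantive obstacle is the middle step, namely turning the abstract Noetherian hypothesis into the concrete statement that an arbitrary join of normal subgroups collapses to a finite subsum. This is a standard consequence of the ACC on normal subgroups; the point worth stressing is that, unlike the argument behind Theorem~\ref{comp}, it does not need the existence of maximal normal subgroups of $G$ (cf.~Remark~\ref{nmns}), so no appeal to Zorn's lemma outside the Noetherian setting is required.
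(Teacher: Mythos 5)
Your proof is correct and follows essentially the same route as the paper: both reduce $\bigcap_{\lambda}\mathcal{V}(\mathfrak{n}_{\lambda})=\mathcal{V}\bigl(\sum_{\lambda}\mathfrak{n}_{\lambda}\bigr)$ to a finite subsum and then invoke the Alexander subbasis theorem. The only (harmless) difference is how the ascending chain condition is used --- you take a maximal element of the family of finite partial sums, whereas the paper uses the equivalent fact that the join $\sum_{\lambda}\mathfrak{n}_{\lambda}$ is finitely generated and places each generator in a finite subsum.
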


\begin{proof}
Consider a collection $\{\sigma_G\cap \mathcal{V}(\mathfrak n_{\lambda}) \}_{\lambda\in \Omega}$ of subbasic closed sets of $\sigma_G$ with the finite intersection property. By assumption, the normal subgroup $\mathfrak  s:=\sum_{\lambda\in \Omega}\mathfrak n_{\lambda}$ is finitely generated, say $\mathfrak s=(\alpha_1,\ldots,\alpha_n)$. For  every $1\leqslant j\leqslant n$, there exists a finite subset $\Lambda_j$ of $\Omega$ such that $\alpha_j\in \sum_{\lambda\in \Lambda_j}\mathfrak n_{\lambda}$. Thus, if  $\Lambda:=\bigcup_{j=1}^n\Lambda_j$, it immediately follows that $\mathfrak s=\sum_{\lambda\in \Lambda}\mathfrak n_{\lambda}$. Hence we have
\begin{align*} 
\bigcap_{\lambda\in \Omega}(\sigma_G\cap \mathcal{V}(\mathfrak n_{\lambda}))&=\sigma_G\cap \mathcal{V}(\mathfrak s)\\&=\sigma_G\cap \mathcal{V}\left(\sum_{\lambda\in \Lambda}\mathfrak n_{\lambda} \right)\\&= \bigcap_{\lambda\in \Lambda}(\sigma_G\cap \mathcal{V}(\mathfrak n_{\lambda})) \neq \emptyset,
\end{align*}
since $\Lambda$ is finite and $\{\sigma_G\cap \mathcal{V}(\mathfrak n_{\lambda}) \}_{\lambda\in \Omega}$ has the finite intersection property. Then the conclusion follows by the Alexander's subbasis Theorem. 
\end{proof}

\begin{cor}
If $G$ is a Noetherian group then every normal structure space is  Noetherian.
\end{cor}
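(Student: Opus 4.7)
The plan is to invoke the standard topological characterization that a space is Noetherian if and only if every one of its subspaces is quasi-compact, and then to observe that the argument used in the preceding theorem carries over verbatim from $\sigma_G$ to any subspace $Y\subseteq \sigma_G$. So, fix a Noetherian group $G$, a normal structure space $\sigma_G$, and an arbitrary subset $Y\subseteq \sigma_G$ equipped with the subspace topology. The subbasic closed sets of $Y$ are exactly $Y\cap \mathcal{V}(\mathfrak{x})$ for $\mathfrak{x}\in\mathrm{N}_G$, so by Alexander's subbasis theorem I only need to check that every family $\{Y\cap \mathcal{V}(\mathfrak{x}_\lambda)\}_{\lambda\in\Omega}$ with the finite intersection property has nonempty total intersection.

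To do this, I would re-run the computation from the preceding theorem inside $Y$. Using the identity $\bigcap_{\lambda}\mathcal{V}(\mathfrak{x}_\lambda)=\mathcal{V}\bigl(\sum_\lambda\mathfrak{x}_\lambda\bigr)$, the total intersection equals $Y\cap \mathcal{V}\bigl(\sum_\lambda\mathfrak{x}_\lambda\bigr)$. Because $G$ is Noetherian, the join $\mathfrak{s}:=\sum_\lambda \mathfrak{x}_\lambda$ is finitely generated, and each of its finitely many generators already sits in a finite subsum, so there is a finite $\Lambda\subseteq \Omega$ with $\mathfrak{s}=\sum_{\lambda\in\Lambda}\mathfrak{x}_\lambda$. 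Consequently
$$
\bigcap_{\lambda\in\Omega}\bigl(Y\cap \mathcal{V}(\mathfrak{x}_\lambda)\bigr)=Y\cap \mathcal{V}(\mathfrak{s})=\bigcap_{\lambda\in\Lambda}\bigl(Y\cap \mathcal{V}(\mathfrak{x}_\lambda)\bigr),
$$
which is nonempty by the finite intersection property. Hence $Y$ is compact, and since $Y$ was arbitrary, $\sigma_G$ is Noetherian.

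I do not anticipate any real obstacle: the substantive content is already in the preceding theorem, and the only new observation is that its proof never used a feature specific to the ambient $\sigma_G$ beyond closure of $\mathrm{N}_G$ under joins and Noetherianness of $G$. An attempted direct argument via descending chain condition on closed sets would be less convenient, because a general closed set in $\sigma_G$ is an intersection of finite unions of subbasic closed sets and is not obviously determined by a single normal subgroup invariant; passing through subspace compactness sidesteps this difficulty cleanly.
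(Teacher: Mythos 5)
Your argument is correct, and it supplies exactly the step the paper leaves implicit: the corollary is stated without proof as a consequence of the preceding compactness theorem, and the only honest way to get Noetherianity from that theorem is, as you do, to invoke the characterization that a space is Noetherian iff every subset is quasi-compact and then observe that the theorem's proof (join of the $\mathfrak{x}_\lambda$ is finitely generated, hence realized by a finite subfamily, hence the total intersection of a subbasic family with the finite intersection property equals a finite subintersection) never uses anything about the ambient set and so runs verbatim inside any subspace $Y$. Your closing remark is also on point: mere compactness of $\sigma_G$ would not suffice, and a direct descending-chain argument on closed sets would be awkward since general closed sets are infinite intersections of finite unions of the $\mathcal{V}(\mathfrak{x})$.
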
 

\begin{rem}
\emph{It is well known that every Noetherian topological
space can be written as a finite union of non-empty irreducible closed subsets. 
The spectrum $\mathrm{Prp}(G)$ of a group $G$ always has this property irrespective of $G$ being a Noetherian group, and hence $\mathrm{Prp}(G)$ is always a Noetherian normal structure space. This follows from the fact that $\mathcal{V}(\mathfrak{e})$ is an irreducible closed subset in $\mathrm{Prp}(G)$ (see Corollary \ref{spiir}).  }
\end{rem} 

When we study topological spaces whose underlying sets are various classes of ideals, determining which ones are spectral spaces in the sense of \cite{H69} (see also \cite{DST19}) is one of the most important questions. The following result completely characterizes that of $\mathrm{Spec}_G$.

\begin{thm}\cite[Proposition 3.9]{FGT22}.
The spectrum $\mathrm{Spec}_G$ endowed with Zariski topology (= coarse lower topology) is spectral if and only if it is compact, if and
only if $G$ has a maximal normal subgroup.
\end{thm}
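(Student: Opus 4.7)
I would prove the three-way equivalence as a cycle: spectral $\Rightarrow$ compact $\Rightarrow$ $G$ has a maximal normal subgroup $\Rightarrow$ spectral. The first implication is immediate, since every spectral space is compact by Hochster's definition.

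For compact $\Rightarrow$ existence of a maximal normal subgroup, the strategy is to run a Zorn-type argument driven by the finite intersection property. Let $\mathcal{F}$ denote the family of proper normal subgroups $\mathfrak{n}$ of $G$ with $\mathcal{V}(\mathfrak{n}) \neq \emptyset$; this family is nonempty because any prime lies in its own hull. Given any chain $\{\mathfrak{n}_\alpha\}$ in $\mathcal{F}$, the nested family $\{\mathcal{V}(\mathfrak{n}_\alpha)\}$ of nonempty subbasic closed sets has the finite intersection property, so compactness forces $\bigcap_\alpha \mathcal{V}(\mathfrak{n}_\alpha) = \mathcal{V}\bigl(\sum_\alpha \mathfrak{n}_\alpha\bigr)$ to be nonempty; in particular $\sum_\alpha \mathfrak{n}_\alpha \neq G$, placing the union back in $\mathcal{F}$. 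Zorn then delivers a maximal element $\mathfrak{m} \in \mathcal{F}$, and a separate check shows $\mathfrak{m}$ is in fact maximal in $\mathrm{Prp}_G$: any strict enlargement $\mathfrak{n} \supsetneq \mathfrak{m}$ must have $\mathcal{V}(\mathfrak{n}) = \emptyset$ by maximality in $\mathcal{F}$, and from there one argues $\mathfrak{n} = G$.

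For $G$ has a maximal normal subgroup $\Rightarrow$ spectral, I would verify Hochster's four axioms in turn. The $T_0$ property is Theorem \ref{ct0t1}. Sobriety follows from Proposition \ref{sirrs}, since $\mathrm{Spec}_G \subseteq \mathrm{Irr}^+_G$ by inclusion \eqref{psi}; a short subspace argument using Theorem \ref{irrc} confirms that the generic points produced there remain in $\mathrm{Spec}_G$. Compactness is the heart of this direction: I would show that the existence of a maximal normal subgroup yields the partition of unity property of Definition \ref{fulle}, after which Theorem \ref{comp} applies. Finally, a basis of quasi-compact opens closed under finite intersection is provided by the principal opens $\mathrm{Spec}_G \setminus \mathcal{V}(\langle a\rangle)$ for $a \in G$; closure under finite intersection follows from the primality-based identity $\mathcal{V}(\langle a\rangle) \cup \mathcal{V}(\langle b\rangle) = \mathcal{V}([\langle a\rangle, \langle b\rangle])$ on $\mathrm{Spec}_G$, and each such open is quasi-compact by the same FIP argument.

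The principal obstacle is the lack of a direct parallel with commutative ring theory: a maximal normal subgroup of $G$ need not be prime (its quotient may be abelian simple), so the routine ring-theoretic argument that every proper ideal is contained in a maximal, hence in a prime, is unavailable. Both the Zorn step in Step 2 and the verification of partition of unity in Step 3 must navigate around this mismatch, and the transit from ``maximal element of $\mathcal{F}$'' to ``maximal in $\mathrm{Prp}_G$'' is where the bulk of the real work of the proof sits.
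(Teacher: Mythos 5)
First, note that the paper contains no proof of this statement at all: it is quoted verbatim from \cite[Proposition 3.9]{FGT22}, so there is no in-paper argument to compare yours against. Judged on its own terms, your outline has two concrete breakdowns, both sitting exactly at the point where you yourself observe that the ring--group analogy fails. The more decisive one is in Step 3. You propose to get compactness from Theorem \ref{comp} by showing that the existence of a maximal normal subgroup gives $\mathrm{Spec}_G$ the partition-of-unity property of Definition \ref{fulle}. This cannot work: taking $\mathfrak{n}=\mathfrak{n}'=G$ in the definition of a prime shows that no prime normal subgroup contains $[G,G]$, so $\mathcal{V}([G,G])=\emptyset$ always holds in $\mathrm{Spec}_G$, and the partition-of-unity property would force $[G,G]=G$. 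Hence for any non-perfect group possessing a maximal normal subgroup (already $\mathbb{Z}$ or $S_3$), the implication ``maximal normal subgroup $\Rightarrow$ partition of unity'' is false, and compactness must be established by an entirely different mechanism. (The appeal to Proposition \ref{sirrs} for sobriety is also not a formality: generic points of irreducible closed subsets of $\mathrm{Irr}^+_G$ need not lie in the subspace $\mathrm{Spec}_G$, and the quasi-compactness of the basic opens is asserted, not proved.)

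The second breakdown is in Step 2, and you have flagged it yourself without filling it. Your Zorn/FIP argument does produce a normal subgroup $\mathfrak{m}$ maximal in the family $\mathcal{F}=\{\mathfrak{n}\in\mathrm{Prp}_G\mid \mathcal{V}(\mathfrak{n})\neq\emptyset\}$, and one checks easily that such an $\mathfrak{m}$ is prime (any prime over $\mathfrak{m}$ lies in $\mathcal{F}$ and contains $\mathfrak{m}$, hence equals it). But the passage from ``$\mathcal{V}(\mathfrak{n})=\emptyset$ for every $\mathfrak{n}\supsetneq\mathfrak{m}$'' to ``$\mathfrak{n}=G$'' is precisely what fails for groups: a proper normal subgroup need not be contained in any prime --- again $[G,G]$ witnesses this whenever $G$ is not perfect --- so what your argument actually delivers is a maximal element of the poset $\mathrm{Spec}_G$, not a maximal element of $\mathrm{Prp}_G$. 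Closing that gap is the entire mathematical content of the implication, and the phrase ``from there one argues $\mathfrak{n}=G$'' is not an argument. You also need to dispose separately of the case $\mathrm{Spec}_G=\emptyset$ (which is compact, indeed spectral, while $G$ may lack maximal normal subgroups, as the Pr\"ufer group shows), so some hypothesis or convention from \cite{FGT22} must be being used that your outline does not identify. In short: the cycle of implications is the right skeleton, but both nontrivial arcs of the cycle are missing, and the one route you do specify in detail (via Definition \ref{fulle} and Theorem \ref{comp}) is provably unavailable for $\mathrm{Spec}_G$.
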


In \cite{FGS22}, an attempt has been made in studying spectrality of various classes of ideals (of a commutative ring) endowed with coarse lower topologies. Similarly, it will be interesting to classify normal structure spaces that are spectral. 

\section*{Acknowledgement}
The author would like to thank the anonymous referee for his/her careful reading and for several remarks that helped to
improve the presentation of the paper.


\end{document}